\newcommand{\R}{\mathbb{R}}
\newcommand{\of}[1]{\left(#1\right)}
\DeclareMathOperator{\Id}{\mathrm{Id}}
\newcommand{\Was}[1]{\mathbb{W}_{#1}}
\newtheorem{corollary}[]{Corollary}
\DeclareMathOperator*{\argmin}{argmin}
\newcommand{\cP}{\mathcal{P}}
\newcommand{\cU}{\mathcal{U}}
\newcommand{\cO}{\mathcal{O}}
\newcommand{\Ent}{\mathrm{Ent}}
\newcommand{\Tr}{\text{Tr}}
\newcommand{\Tan}[1]{\text{Tan}(#1)}
\newcommand{\Schro}{\text{Schr\"{o}dinger}}
\newcommand{\Exp}[1]{\mathrm{E}_{#1}}
\newcommand{\eps}{\varepsilon}
\newtheorem{lemma}{Lemma}
\newtheorem{remark}{Remark}
\newtheorem{theorem}{Theorem}
\newtheorem{assumption}{Assumption}
\newcommand{\SB}[3]{\text{SB}_{#2}^{#3}(#1)}
\newcommand{\LD}[3]{\text{LD}_{#2}^{#3}(#1)}
\newcommand{\SBstatic}[2]{\pi_{#1,#2}}
\newcommand{\SBdynam}[2]{S_{#1,#2}}
\newcommand{\LDstatic}[2]{\ell_{#1,#2}}
\newcommand{\BMstatic}[2]{\mu_{#1,#2}}
\newcommand{\BPbase}[2]{\mathcal{B}_{#1,#2}}
\newcommand{\LBPbase}[2]{\mathcal{L}_{#1,#2}}
\newcommand{\opt}[3]{S_{#1}^{#2}\of{#3}}
\newcommand{\commentout}[1]{}
\title[Langevin Approximation to Schr\"odinger bridge]{Langevin Diffusion Approximation to Same Marginal Schr\"{o}dinger Bridge}
\author{Medha Agarwal}
\address{Medha Agarwal\\ Department of Statistics \\ University of Washington\\ Seattle WA 98195, USA\\ {Email: medhaaga@uw.edu}}
\author{Zaid Harchaoui}
\address{Zaid Harchaoui\\ Department of Statistics \\ University of Washington\\ Seattle WA 98195, USA\\ {Email: zaid@uw.edu}}
\author{Garrett Mulcahy}
\address{Garrett Mulcahy\\ Department of Mathematics \\ University of Washington\\ Seattle WA 98195, USA\\ {Email: gmulcahy@uw.edu}}
\author{Soumik Pal}
\address{Soumik Pal\\ Department of Mathematics \\ University of Washington\\ Seattle WA 98195, USA\\ {Email: soumik@uw.edu}}
\keywords{Schr\"odinger bridges, Langevin diffusion, Score function, entropic Brenier map}
\subjclass[2000]{49N99, 49Q22, 60J60}
\thanks{This research is partially supported by the following grants. All the authors are supported by NSF grant DMS-2134012. Additionally, Pal is supported by NSF grant DMS-2133244; Harchaoui is supported by NSF grant CCF-2019844, DMS-2023166, DMS-2133244. Mulcahy is supported by the National Science Foundation Graduate
Research Fellowship Program under Grant No.\ DGE-2140004. Thanks to PIMS Kantorovich Initiative for facilitating this collaboration supported through a PIMS PRN \#01 and the NSF Infrastructure grant DMS 2133244. The authors are listed in alphabetical order.}
\date{\today}
\begin{document}

\begin{abstract}
We introduce a novel approximation to the same marginal $\Schro$ bridge using the Langevin diffusion. As $\eps \downarrow 0$, it is known that the barycentric projection (also known as the entropic Brenier map) of the $\Schro$ bridge converges to the Brenier map, which is the identity. Our diffusion approximation is leveraged to show that, under suitable assumptions, the difference between the two is $\eps$ times the gradient of the marginal log density (i.e., the score function), in $\mathbf{L}^2$. More generally, we show that the family of Markov operators, indexed by $\eps > 0$, derived from integrating test functions against the conditional density of the static Schr\"{o}dinger bridge at temperature $\eps$, admits a derivative at $\eps=0$ given by the generator of the Langevin semigroup. Hence, these operators satisfy an approximate semigroup property at low temperatures. 
\end{abstract}

\maketitle

\section{Introduction} \label{sec:introduction}
Let $\cP_2(\R^d)$ denote the space of square-integrable Borel probability measures on $\R^d$. Throughout this paper we will let $\rho \in \cP_2(\mathbb{R}^{d})$ refer to both the measure and its Lebesgue density, whenever it exists. $\cP_2(\R^d)$ can be turned into a metric space equipped with the Wasserstein-$2$ metric \cite[Chapter 7]{ambrosio2005gradient}. We will refer to this metric space as simply the Wasserstein space. For any pair of probability measures $(\rho_1, \rho_2)$ in the Wasserstein space, let $\Pi(\rho_1, \rho_2)$ denote the set of couplings, i.e., joint distributions, with marginals $(\rho_1, \rho_2)$. Further, define the relative entropy, also known as Kullback-Leibler (KL) divergence, between $\rho_1$ and $\rho_2$ as
\begin{align}
    H(\rho_1|\rho_2) := \begin{cases}
        \int \log\left(\frac{d\rho_1}{d\rho_2}\right) d\rho_1 &\text{if $\rho_1 << \rho_2$} \\
        +\infty &\text{otherwise}
    \end{cases}
\end{align}

The static Schr\"{o}dinger bridge (\cite{schroLeonard13}) at temperature $\eps$ with both marginals $\rho$ is the solution to the following optimization problem
\begin{align}\label{eq:schrodinger}
    \SBstatic{\rho}{\eps}  &:= \argmin_{\pi \in \Pi(\rho,\rho)} H(\pi|\BMstatic{\rho}{\eps}),
\end{align}
where $\BMstatic{\rho}{\eps} \in \cP(\mathbb{R}^{d} \times \mathbb{R}^{d})$ has density
\begin{align}\label{eq:base-for-SB}
\BMstatic{\rho}{\eps}(x,y) := \rho(x)\frac{1}{(2\pi\eps)^{d/2}}\exp\left(-\frac{1}{2\eps}\|x-y\|^{2}\right).    
\end{align}
If $(X,Y) \sim \SBstatic{\rho}{\eps}{}$, define the \textbf{barycentric projection} \cite{pooladian2022entropic,chewi2022entropic} (also known as the \textbf{entropic Brenier map} \cite{divol2024tightstabilityboundsentropic}) as a function from $\R^d$ to itself given by 
\begin{align}\label{eq:bary-proj}
    \BPbase{\rho}{\eps}(x) &= \Exp{\SBstatic{\rho}{\eps}{}}[Y|X=x].
\end{align}
As $\eps \downarrow 0$ it is known to converge \cite{nutz-weisel-22} in a suitable sense to the Brenier map \cite{pooladian2022entropic}, i.e.\ the solution to the unregularized optimal transport problem. 

The convergence of the entropic Brenier map to the Brenier map, or equivalently the gradient of entropic potentials (also called $\Schro$ potentials, see \eqref{eq:SB-density} for definition) to the gradient of Kantorovich potentials, is a subject of intense focus in the $\Schro$ bridge literature \cite{gigli2021second,chiarini2022gradient,nutz-weisel-22,divol2024tightstabilityboundsentropic}. The results of this paper contribute to this effort in the same-marginal case when the Brenier map is the identity. We show that, under suitable assumptions on $\rho$, the following convergence holds in $\mathbf{L}^2(\rho)$:
\begin{equation}\label{eq:barycenterapprox}
\lim_{\epsilon \downarrow 0}\frac{\BPbase{\rho}{\eps}(x) - x}{\eps} = \frac{1}{2} \nabla \log \rho(x).  
\end{equation}
The quantity $\nabla \log \rho$ is called the \textbf{score function} of $\rho$.  In particular, 
\[
\lim\limits_{\eps \downarrow 0} \eps^{-2}\|\BPbase{\rho}{\eps}-\Id\|_{\mathbf{L}^2(\rho)}^{2} = \frac{1}{4}I(\rho),
\]
where the quantity $I(\rho)$ is the Fisher information of $\rho$.

This result was expected from various recent works using heuristic arguments. For example, the proof of \cite[Theorem 1]{sander_22} notes that \eqref{eq:barycenterapprox} follows from \cite[Theorem 1]{marshall-coifman-19}. But this requires rather restrictive assumptions, such as compactly supported densities and that a certain linear function should be in the span of finitely many Neumann eigenfunctions of the Laplacian associated to the support. Similarly, \cite[Remark 4]{mordant24selfEOT} presents a heuristic argument from Fourier analysis by which one can anticipate \eqref{eq:barycenterapprox}.

One technical novelty in our work is that the proofs are probabilistic, which allows us to vastly generalize \eqref{eq:barycenterapprox}. The foundations of our results can be found in Theorem \ref{thm:ld-sb-general-g}, which shows that the two point joint distribution of a stationary Langevin diffusion is a close approximation of the Schrödinger bridge with equal marginals.
More concretely, let $\rho = e^{-g}$ be some element in $\cP_2(\R^d)$ for which the Langevin diffusion process with stationary distribution $\rho$ exists (weakly) and is unique in law. This diffusion process is governed by the stochastic differential equation 
\begin{equation}\label{eq:langevin_sde}
    dX_t = -\frac{1}{2}\nabla g(X_t)dt + dB_t\,,\quad X_0 \sim \rho,
\end{equation}
where $(B_t, t \geq 0)$ is the standard $d$-dimensional Brownian motion. Therefore, for any $\eps>0$, $\LDstatic{\rho}{\eps} := \text{Law}(X_0,X_{\eps})$ is an element in $\Pi(\rho, \rho)$. Then, Theorem \ref{thm:ld-sb-general-g} shows that, under appropriate conditions on $\rho$, the symmetrized relative entropy, i.e., the Jensen-Shannon divergence, between $\LDstatic{\rho}{\eps}$ and the Schr\"odinger bridge $\SBstatic{\rho}{\eps}$ is $o(\eps^2)$. Although this level of precision is sufficient for our purpose, it is shown via explicit computations in \eqref{eq:gaussiancompute} that when $\rho$ is Gaussian this symmetrized relative entropy is actually $O(\eps^4)$, an order of magnitude tighter.

Using this Langevin approximation, \eqref{eq:barycenterapprox} is made precise in Theorem \ref{thm:score-function-approx}, including an error bound. However, our contributions go further. Let $\xi$ denote a compactly supported smooth real-valued function on $\mathbb{R}^d$. Replace the identity function in \eqref{eq:bary-proj} by $\xi$, i.e., consider $\Exp{\SBstatic{\rho}{\eps}{}}[\xi(Y)|X=x]$. Then, we show in Theorem \ref{thm:sb-generator-approx} that 
\begin{equation}\label{eq:genapproxSB}
\lim_{\eps \downarrow 0} \frac{\Exp{\SBstatic{\rho}{\eps}{}}[\xi(Y)|X=x] - \xi(x)}{\eps}= L\xi(x),
\end{equation}
where $L$ is the generator of the Langevin diffusion acting on smooth functions. The limit \eqref{eq:genapproxSB} tells us that the family of integral operators derived from the same-marginal Schr\"{o}dinger bridge for different $\eps$ forms an approximate semigroup as $\eps \rightarrow 0+$ with a generator given by the Langevin diffusion.


Another consequence of our Langevin approximation to the Schr\"odinger bridge is to show that ``tangential approximations'' to absolutely continuous curves in the Wasserstein spaces may be done using entropic Brenier potentials, which has applications to computational optimal transport. 
Absolutely continuous curves (see \cite[Definition 1.1.1]{ambrosio2005gradient}) $\left(\rho_t,\; t\ge 0 \right)$ on the Wasserstein space are characterized as weak solutions \cite[eqn.\ (8.1.4)]{ambrosio2005gradient} of continuity equations
\begin{align}\label{eq:continuity-equation}
   \frac{\partial}{\partial t} \rho_t + \nabla \cdot \of{v_t \rho_t} = 0\,.
\end{align}
Here $v_t: \R^d \to \R^d$ is a time-dependent Borel velocity field of the continuity equation whose properties are given by \cite[Theorem 8.3.1]{ambrosio2005gradient}. For any absolutely continuous curve, if $v_t$ belongs to the tangent space at $\rho_t$ (\cite[Definition 8.4.1]{ambrosio2005gradient}), \cite[Proposition 8.4.6]{ambrosio2005gradient} states the following approximation result for Lebesgue-a.e. $t \geq 0$
\begin{align}\label{eq:ags8-4-6}
    \lim\limits_{\eps \to 0} \frac{1}{\eps}\Was{2}(\rho_{t+\eps},(\Id+\eps v_t)_{\#}\rho_t) = 0. 
\end{align}
For our purpose, assume that the above limit holds at time $t = 0$. This can be thought of as a Wasserstein counterpart of the Euclidean tangential approximation and is often used in discretization schemes (such as the explicit Euler scheme) of Wasserstein gradient flows. 

An example of this is the (probabilists') heat equation starting from $\rho$, 
\[
\frac{\partial}{\partial t} \rho_t - \frac{1}{2}\Delta \rho_t = 0, \quad \text{with tangent velocity}\quad v_t = -\frac{1}{2}\nabla \log \rho_t.
\]
The tangential approximation in \eqref{eq:ags8-4-6} at $t=0$ implies that 
\begin{align}\label{eq:tangentheat}
    \lim\limits_{\eps \to 0} \frac{1}{\eps}\Was{2}\left(\rho_\eps,\left(\Id-\frac{\eps}{2} \nabla \log \rho \right)_{\#}\rho\right) = 0. 
\end{align}
However, due to \eqref{eq:barycenterapprox}, one expects $2x- \BPbase{\rho}{\eps}(x) \approx x - \frac{\eps}{2} \nabla \log \rho(x)$, and hence
\begin{align}\label{eq:eq:tangentexpeuler}
    \lim\limits_{\eps \to 0} \frac{1}{\eps}\Was{2}\left(\left(\Id-\frac{\eps}{2} \nabla \log \rho \right)_{\#}\rho,\left(2\Id- \BPbase{\rho}{\eps}\right)_{\#}\rho\right) = 0. 
\end{align}
Hence, one may be able to replace the tangential approximation \eqref{eq:tangentheat} by a Schr\"odinger bridge approximation in the following sense. 
\begin{align*}
    \lim\limits_{\eps \to 0} \frac{1}{\eps}\Was{2}\left(\rho_\eps,\left(2\Id- \BPbase{\rho}{\eps}\right)_{\#}\rho\right) = 0. 
\end{align*}
The advantage of this approach is that while the score function is notoriously difficult to estimate from data, an approximate barycentric projection is readily available due to the Sinkhorn algorithm. Of course, this method is only useful if it can be replicated for a wide class of tangent velocities. Our final contribution in this paper is to show that this is indeed true.

For the moment let $v \in \Tan{\rho}$ be such that $v = \nabla u$ for some smooth $u \in C^{\infty}(\mathbb{R}^{d})$. When this holds, we say that $v$ is of \emph{gradient type}. By definition, such velocity fields are dense in $\Tan{\rho}$. Suppose that there is some $\theta \in \mathbb{R} \setminus \{0\}$ such that $\int e^{2\theta u(x)}dx < \infty$, define the \textbf{surrogate measure} $\sigma \in \cP_{2}^{ac}(\mathbb{R}^{d})$ to be one with the density
\begin{align}\label{eq:surrogate}
    \sigma(x) &:= \exp\left(2\theta u(x)- \Lambda(\theta)\right), \quad \text{where}\quad \Lambda(\theta):=\log \int e^{2\theta u(y)}dy.
\end{align}

Define \eqref{eq:ags8-4-6} the \textbf{explicit Euler update} as the pushforward
\begin{align}\label{eq:exp-euler}
    \opt{\eps}{1}{\rho} := (\Id + \eps v)_{\#}\rho.
\end{align}

 If we consider the Schr\"odinger bridge at temperature $\eps$ with both marginals to be $\sigma$, then, by \eqref{eq:barycenterapprox}, the following approximation holds in $\mathbf{L}^2(\sigma)$:
\begin{align}\label{eq:bp-base-sigma}
    \BPbase{\sigma}{\eps}(x) \approx x+ \frac{\eps}{2}\nabla \log \sigma(x) &= x + \theta \eps\nabla u(x). \\
    \text{Rearranging terms,}\quad x + \frac{\BPbase{\sigma}{\eps}(x) - x}{\theta} &\approx x + \epsilon \nabla u(x). 
\end{align}
Note that $\theta=-1$ and $v=-\frac{1}{2}\nabla \log \rho$ gives us \eqref{eq:eq:tangentexpeuler}. In this case, $\sigma=\rho$.

But by definition, velocity fields of gradient type are $\mathbf{L}^2(\rho)$-\emph{dense} in $\Tan{\rho}$. Hence, by constructing a sequence of surrogate measures and their corresponding entropic Brenier maps, we can approximate arbitrary elements of $\Tan{\rho}$. This result is precisely the statement of Theorem \ref{thm:one_step_convergence}.

\textbf{Discussion on Same Marginal $\Schro$ Bridges}
The same marginal $\Schro$ bridge has seen a recent surge of attention in works such as \cite{sander_22,lavenant2024sinkhorndiv,mordant24selfEOT}. At first glance, this attention may appear unwarranted as the unregularized same-marginal optimal transport problem is completely trivial. However, the \emph{deviation} of the same marginal $\Schro$ bridge at small temperature from the unregularized solution (i.e.\ $\Id$) has surprising connections to machine learning \cite{sander_22}, statistics \cite{mordant24selfEOT}, and Wasserstein gradient flows \cite{AHMP}. 

These connections are rooted in the appearance of the score function as the first order deviation in $\eps$ of the entropic Brenier map from the identity map. \cite{sander_22} uses this intuition to derive a deep connection between a modified self-attention mechanism of the Transformer neural architecture and the heat flow. Similarly, \cite{mordant24selfEOT} develops statistical results based on using the entropic Brenier map as a nonparametric (albeit biased) estimator of the score function, which is of great interest in machine learning (see diffusion models, \cite{song2020score}). While perhaps less connected to the aforementioned works, \cite{lavenant2024sinkhorndiv} uses the Sinkhorn divergence to develop a novel geometric structure on the space of probability measures, which due to the debiasing term in the Sinkhorn divergence necessitates a closer analysis of the same-marginal $\Schro$ bridge. Overall, the results presented in our paper closely connect with those presented in \cite{sander_22}. Theorems~\ref{thm:ld-sb-general-g}-\ref{thm:one_step_convergence} can be understood as probabilistic approaches to~\cite[Theorem 1]{sander_22}, as well as a finer treatment to key steps of the mathematical analysis of the convergence to the heat flow. The first statement of \cite[Theorem 1]{sander_22} concerns the convergence of the rescaled gradients of entropic potentials for the $\Schro$ bridge with the same marginals. Consider the so-called entropic (or, Schr\"odinger) potentials $(f_{\eps}, \eps > 0)$ as defined later on in \eqref{eq:SB-density}. In this language, \cite[Theorem 1]{sander_22} states that $\eps^{-1} \nabla f_{\eps} \to -\nabla \log \rho$ in $\textbf{L}^{2}(\rho)$, as $\eps \rightarrow 0+$, when $\rho$ is compactly supported and satisfies the assumptions of \cite[Theorem 1]{marshall-coifman-19}. 
We prove this result in the case of fully supported $\rho$ and with less taxing technical assumptions in Theorem \ref{thm:score-function-approx}. 

In follow up work, \cite{MP25} generalizes Theorems \ref{thm:ld-sb-general-g}-\ref{thm:sb-generator-approx} to the manifold setting.

\textbf{Outline of the paper.}
Section \ref{sec:preliminaries} presents a self-contained overview of the relevant notions of $\Schro$ bridges and the Langevin diffusion to present and prove the results in this paper. Section \ref{sec:sb-ld} is devoted to proving the Langevin approximation to the same-marginal Schr\"odinger bridge, Theorem \ref{thm:ld-sb-general-g}. In Section \ref{sec:one-step}, this approximation is then leveraged to show that \eqref{eq:barycenterapprox} holds in the sense made precise by Theorem \ref{thm:score-function-approx}. Similarly, Theorem \ref{thm:sb-generator-approx} uses this approximation to establish the generator convergence in \eqref{eq:genapproxSB}. Finally, Theorem \ref{thm:one_step_convergence} demonstrates that a well-chosen sequence of barycentric projections can approximate arbitrary elements of the tangent space at a given measure satisfying smoothness and integrability assumptions. 

Some of the technical challenges we overcome in this paper involve proving results about Schr\"odinger bridges at low temperatures when the marginals are not compactly supported. The latter is a common assumption in the literature, see for example \cite{pal2019difference,pooladian2022entropic,sander_22}. However, the existence of Langevin diffusion requires an unbounded support.  Additionally, in Corollary \ref{cor:CTextension} we extend the expansion of entropic OT cost function from \cite[Theorem 1.6]{conforti21deriv} for the case of same marginals in \eqref{eq:CTextension}. 

\section{Preliminaries}\label{sec:preliminaries}
In this section we fix a measure $\rho \in \cP_{2}(\mathbb{R}^{d})$ with density given by $\rho = e^{-g}$ for some function $g: \mathbb{R}^{d} \to \mathbb{R} \cup \{+\infty\}$. Here, we follow the standard convention in optimal transport of using the same notation to refer to an absolutely continuous measure and its density. We will state assumptions on $g$ later. Let $\cP_{2}^{ac}(\mathbb{R}^{d})$ denote the collection of probability measures on $\mathbb{R}^{d}$ with density and finite second moments.

In this section, we collate standard results about the Langevin diffusion and $\Schro$ bridge and introduce the notions from stochastic calculus of which we will make use.

Let $C^{d}[0,\infty)$ denote the set of continuous paths from $[0,\infty)$ to $\mathbb{R}^{d}$, equipped with the locally uniform metric. Denote the coordinate process by $(\omega_t, t \geq 0)$. Endow this space with a filtration satisfying the \textit{usual conditions} \cite[Chapter 1, Definition 2.25]{karatshreve91}. For $x \in \mathbb{R}^{d}$, let $W_{x}$ denote the law on $C^{d}[0,\infty)$ of the standard $d$-dimensional Brownian motion started from $x$. All stochastic integrals are It\^{o} integrals.

Let $g \in C^2(\mathbb{R}^{d})$ and assume that $(Y_t, t \geq 0)$, the stationary Langevin diffusion with stationary measure equal to $\rho = e^{-g}$, exists. That is, its law on $C^{d}[0,\infty)$ is a weak solution to (\ref{eq:langevin_sde}) with initial condition $Y_0 \sim \rho$. Let $Q$ denote the law of this process on $C^{d}[0,\infty)$, and let $Q_x$ denote the law of the Langevin diffusion started from $x$. As in the Introduction, for $\eps > 0$, set $\LDstatic{\rho}{\eps} = \text{Law}(Y_0,Y_\eps)$. Under suitable assumptions on $g$ (see Assumption \ref{assumption:LD_SB_relative_entropy} below), $(Y_t, t \geq 0)$ exists with a.s.\ infinite explosion time \cite[Theorem 2.2.19]{royer-lsi}, and by \cite[Lemma 2.2.21]{royer-lsi} for each time $t > 0$ the laws of $Q_x$ and $P_x$ restricted to $C^{d}[0,t]$ are mutually absolutely continuous with Radon-Nikodym derivative 
\begin{align}\label{eq:rn-LD-BM}
    \left.\frac{dQ_x}{dW_x}\right|_{C^{d}[0,t]}(\omega) &= \sqrt{\frac{\rho(\omega_t)}{\rho(x)}}\exp\left(-\int_{0}^{t} \left(\frac{1}{8}\|\nabla g(\omega_s)\|^2-\frac{1}{4}\Delta g(\omega_s)\right) ds\right). 
\end{align}
The expression inside the integral of the $\exp(\cdot)$ is a distinguished quantity in the Langevin diffusion literature (see \cite{LKreener}). Denote it by 
\begin{align}\label{eq:harmoniccharacteriestic}
    \cU(x) &:= \frac{1}{8}\|\nabla g(x)\|^2 - \frac{1}{4}\Delta g(x). 
\end{align}
This function is called the harmonic characteristic in \cite[eqn.\ (2)]{vonrenesse-conf18}, which provides a heuristic physical interpretation for $\cU$ and $\nabla \cU$ (the latter of which appears in Theorem \ref{thm:ld-sb-general-g}) as the ``mean acceleration'' of the Langevin bridge. Its interpretation as an acceleration is also explored in \cite{conforti2019second}. 

Let $(p_t(\cdot,\cdot), t > 0)$ and $(q_t(\cdot,\cdot), t > 0)$ denote the transition kernels for the standard $d$-dimensional Brownian motion and the Langevin diffusion, respectively. By \eqref{eq:rn-LD-BM},
\begin{align*}
    \frac{q_{t}(x,y)}{p_{t}(x,y)} &= \sqrt{\frac{\rho(y)}{\rho(x)}}\Exp{W_{x}}\left[\exp\left(-\int_{0}^{t} \cU(\omega_s) ds\right)\middle| \; \omega_t = y\right].
\end{align*}
Notice that $\Exp{W_x}[\cdot|\omega_t = y]$ is the expectation with respect to the law of the Brownian bridge from $x$ to $y$ over $[0,t]$. Define the following function
\begin{align}\label{eq:bridge-exp}
    c(x,y,\eps) := -\log\left(\Exp{W_{x}}\left[\exp\left(-\int_{0}^{\eps}\cU(\omega_s) ds\right)\middle| \; \omega_\eps = y\right]\right). 
\end{align}
Recall that $\LDstatic{\rho}{\eps}$ is the joint density of $(Y_0,Y_{\eps})$ under Langevin; it is now equal to
\begin{align}\label{eq:LD-density}
    \LDstatic{\rho}{\eps}(x,y) &= \rho(x)q_{\eps}(x,y) = \sqrt{\rho(x)\rho(y)}p_{\eps}(x,y)\exp(-c(x,y,\eps)) \\
    &= \sqrt{\rho(x)\rho(y)}\frac{1}{(2\pi\eps)^{d/2}}\exp\left(-\frac{1}{2\eps}\|x-y\|^2 -c(x,y,\eps)\right).
\end{align}

Let $(G_t, t \geq 0)$ denote the corresponding Langevin semigroup, $L$ its infinitesimal generator, and $\mathcal{D}(L)$ the domain of $L$ (see \cite[Section 1.14]{bgl-markov} for definitions). That is, for $t \geq 0$ and $f: \mathbb{R}^{d} \to \mathbb{R}$ bounded and measurable,
\begin{align*}
    G_{t}f(x) := \Exp{Q}[f(\omega_t)|\omega_0 = x].
\end{align*}
Let $C^{2}(\mathbb{R}^{d})$ denote the set of twice continuously differentiable functions, by \cite[Chapter VIII, Proposition 3.4]{revuz2004continuous} for $f \in C^{2}(\mathbb{R}^{d})$ it holds that
\begin{align}\label{eq:LD-generator}
    Lf &= \frac{1}{2}\Delta f + \left(-\frac{1}{2}\nabla g\right) \cdot \nabla f. 
\end{align}

We now define the static $\Schro$ bridge with marginals equal to $\rho$, denoted by $\SBstatic{\rho}{\eps}$. For an excellent and detailed account we refer readers to \cite{schroLeonard13} and references therein; we develop only the notions that we use within our proof. Under mild regularity assumptions on $\rho$ (for instance, finite entropy and a moment condition), it is known that for each $\eps > 0$, $\SBstatic{\rho}{\eps}$ admits an $(f,g)$-decomposition \cite[Theorem 2.8]{schroLeonard13}. As we consider the equal marginal $\Schro$ bridge, we can in fact insist that $f = g$. Thus, there exists $f_{\eps}: \mathbb{R}^{d} \to \mathbb{R}$ such that $\SBstatic{\rho}{\eps}$ admits a Lebesgue density of the form
\begin{align}\label{eq:SB-density}
    \SBstatic{\rho}{\eps}(x,y) &= \frac{1}{(2\pi\eps)^{d/2}}\exp\left(\frac{1}{\eps}\left(f_{\eps}(x)+f_{\eps}(y)-\frac{1}{2}\|x-y\|^2\right)\right)\rho(x)\rho(y). 
\end{align}
We refer to the $(f_{\eps}, \eps > 0)$ as entropic potentials (in the literature they are also referred to as $\Schro$ potentials). The entropic potentials are related to the barycentric projection by the following identity \cite[eqn.\ (8)]{chewi2022entropic}:
\begin{align}\label{eq:bp-potential-identity}
    \Exp{\SBstatic{\rho}{\eps}}[Y|X=x] = \nabla \varphi_{\eps}(x) = (\Id - \nabla f_{\eps})(x).
\end{align}
Note that, in general, entropic potentials are unique up to a choice of constant; however, our convention of symmetry forces that each $f_{\eps}$ is indeed unique.

As outlined in \cite[Proposition 2.3]{schroLeonard13}, there is a corresponding dynamic $\Schro$ bridge, obtained from $\SBstatic{\rho}{\eps}$ (the ``static'' $\Schro$ bridge) by interpolating with suitably rescaled Brownian bridges. More precisely, let $R_{\eps}$ denote the law of $(\sqrt{\eps}B_t, t \in [0,1])$ on $C^{d}[0,1]$, where $(B_t, t \geq 0)$ is the reversible $d$-dimensional Brownian motion. Then, the law of the $\eps$-dynamic $\Schro$ bridge on $C^{d}[0,1]$ is given by $\SBdynam{\rho}{\eps}$ where
\begin{align}\label{eq:dynSB-defn}
    \frac{d\SBdynam{\rho}{\eps}{}}{dR_{\eps}}(\omega) = a^{\eps}(\omega_0)a^{\eps}(\omega_1),
\end{align}
where $a^{\eps}(x) := \exp\left(\frac{1}{\eps}f_{\eps}(x)+\log \rho(x)\right)$. Letting $(P_t, t \geq 0)$ denote the (standard) heat semigroup, we define
\begin{align}\label{eq:SB-psi}
    \psi^{\eps}(t,x) &:= \log E_{R_{\eps}}\left[a^{\eps}(\omega_1)|\omega_t = x\right] = \log E_{R_{\eps}}\left[a^{\eps}(\omega_{1-t})|\omega_0 = x\right] = \log P_{\eps(1-t)}a^{\eps}(x),
\end{align}
where the second identity follows from the Markov property of $R_{\eps}$. 
 
Fix $\eps > 0$, set $\rho_{s}^{\eps} := (\pi_{s})_{\#}\SBdynam{\rho}{\eps}$ and call $(\rho_{s}^{\eps}, s \in [0,1])$ the $\eps$-\textbf{entropic interpolation} from $\rho$ to itself, explicitly
\begin{align}\label{eq:entropic-inter}
    \rho_{t}^{\eps} = P_{\eps t}a^{\eps}P_{\eps(1-t)}a^{\eps} = \exp(\psi^{\eps}(t,\cdot)+\psi^{\eps}(1-t,\cdot)).
\end{align}
By \cite[Proposition 2.3]{schroLeonard13}, we construct a random variable with law $\rho_{t}^{\eps}$ as follows. Let $(X_{\eps},Y_{\eps}) \sim \SBstatic{\rho}{\eps}$ and $Z \sim N(0,\Id)$ be independent to the pair, then
\begin{align}\label{eq:rv-entropic-inter}
    X_{t}^{\eps} = (1-t)X_{\eps}+t Y_{\eps}+\sqrt{\eps t(1-t)}Z \sim \rho_{t}^{\eps}.
\end{align}
Thanks to an analogue of the celebrated Benamou-Brenier formula for entropic cost (\cite[Theorem 5.1]{gentil2017analogy}), the entropic interpolation is in fact an absolutely continuous curve in the Wasserstein space. From \cite[Equation (2.14)]{conforti21deriv} the entropic interpolation satisfies the continuity equation \eqref{eq:continuity-equation} with $(\rho_t^{\eps},v_{t}^{\eps})_{t \in [0,1]}$ given by
\begin{align}\label{eq:cont-eq-entropic-inter}
    v_{t}^{\eps} = \frac{\eps}{2}\left(\nabla \psi^{\eps}(t,\cdot)-\nabla \psi^{\eps}(1-t,\cdot)\right).
\end{align}
Moreover, under mild regularity assumptions on $\rho$, we obtain the following entropic variant of Benamou-Brenier formulation \cite[Corollary 5.8]{gentil2017analogy} (with appropriate rescaling)
\begin{align}\label{eq:symm-ent-cost-BB}
    H(\SBstatic{\rho}{\eps}{}|\BMstatic{\rho}{\eps}{}) &= \inf\limits_{(\rho_t,v_t)} \left(\frac{1}{2\eps}\int_{0}^{1} \|v_t\|_{L^{2}(\rho_t)}^2 dt+\frac{\eps}{8}\int_{0}^{1} I(\rho_t)dt\right),
\end{align}
where $I(\cdot)$ is the Fisher information, defined for $\sigma \in \cP_{2}^{ac}(\mathbb{R}^{d})$ by $I(\sigma) = \int \|\nabla \log \sigma\|^{2} d\sigma$. The infimimum in \eqref{eq:symm-ent-cost-BB} is taken over all $(\rho_t,v_t)_{t \in [0,1]}$ satisfying $\rho_0 = \rho_1 = \rho$ and \eqref{eq:continuity-equation}. The optimal selection is given in \eqref{eq:cont-eq-entropic-inter}. That is, the optimal selection is the entropic interpolation. For an AC curve $(\rho_t, t \in [0,1])$, we will call the quantity $\int_{0}^{1} I(\rho_{t}) dt$ the integrated Fisher information. 

Lastly, for the same marginal dynamic $\Schro$ bridge, we make the following observation about the continuity at $\eps = 0$ of the integrated Fisher information along the entropic interpolation. 
\begin{lemma}\label{lem:integrated-fisher-continuity}
Let $\rho \in \cP_{2}(\mathbb{R}^{d})$ satisfy $-\infty <\Ent(\rho) < +\infty$ and $I(\rho) <+\infty$, then
    $\lim\limits_{\eps \downarrow 0} \int_{0}^{1} I(\rho_{t}^{\eps}) dt = I(\rho)$. 
\end{lemma}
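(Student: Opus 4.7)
The plan is to sandwich $\int_0^1 I(\rho_t^\eps)\,dt$ between matching upper and lower bounds, both extracted from the entropic Benamou--Brenier identity \eqref{eq:symm-ent-cost-BB}.

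For the upper bound, I would plug the constant competitor $(\rho_t, v_t) \equiv (\rho, 0)$ into the right-hand side of \eqref{eq:symm-ent-cost-BB}; this is admissible because both endpoints of the dynamic $\Schro$ bridge are $\rho$. This yields $H(\SBstatic{\rho}{\eps}{} \mid \BMstatic{\rho}{\eps}{}) \leq \tfrac{\eps}{8}I(\rho)$. Since the entropic interpolation $(\rho_t^\eps, v_t^\eps)$ attains the infimum in \eqref{eq:symm-ent-cost-BB}, discarding the nonnegative kinetic term produces $\tfrac{\eps}{8}\int_0^1 I(\rho_t^\eps)\,dt \leq H(\SBstatic{\rho}{\eps}{} \mid \BMstatic{\rho}{\eps}{})$. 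Comparing the two inequalities and canceling the common $\eps/8$ factor gives $\int_0^1 I(\rho_t^\eps)\,dt \leq I(\rho)$, so in particular $\limsup_{\eps \downarrow 0}\int_0^1 I(\rho_t^\eps)\,dt \leq I(\rho)$.

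For the matching lower bound, I would instead discard the nonnegative Fisher term in \eqref{eq:symm-ent-cost-BB} at the optimizer and combine with the same upper bound on the entropy, giving
\begin{equation*}
\int_0^1 \|v_t^\eps\|_{\mathbf{L}^2(\rho_t^\eps)}^{2}\,dt \leq \tfrac{\eps^2}{4}\,I(\rho).
\end{equation*}
Since the velocity $v_t^\eps$ from \eqref{eq:cont-eq-entropic-inter} is of gradient type, it is the tangent velocity of the curve $t \mapsto \rho_t^\eps$ in the Wasserstein space, and so its $\mathbf{L}^2(\rho_t^\eps)$-norm controls the metric derivative. Integrating from $0$ to $t$ and applying Cauchy--Schwarz then yields $\Was{2}(\rho_t^\eps,\rho) \leq \tfrac{\eps}{2}\sqrt{I(\rho)}$ for every $t \in [0,1]$, so $\rho_t^\eps \to \rho$ in $\Was{2}$ as $\eps \downarrow 0$, uniformly in $t$. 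Invoking the classical lower semicontinuity of the Fisher information with respect to narrow convergence gives $\liminf_{\eps \downarrow 0} I(\rho_t^\eps) \geq I(\rho)$ for each $t$, and Fatou's lemma upgrades this to $\liminf_{\eps \downarrow 0}\int_0^1 I(\rho_t^\eps)\,dt \geq I(\rho)$, completing the proof.

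The main subtlety I expect is the lower semicontinuity of $I$, which is standard but must be cited with care (for example via the dual representation of $I$ as a supremum of integrals against smooth test functions, so as not to require uniform control on the densities $\rho_t^\eps$). A secondary concern is verifying that the hypotheses $\Ent(\rho) \in (-\infty,+\infty)$ and $I(\rho) < +\infty$ are sufficient to apply the entropic Benamou--Brenier formula \eqref{eq:symm-ent-cost-BB} with equality and with the stated optimizer; if the regularity assumptions of \cite{gentil2017analogy} are not immediately met, a short approximation argument would be inserted, but the two-sided sandwich above is otherwise the entire structure of the proof.
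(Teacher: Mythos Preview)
Your proof is correct and follows the same sandwich structure as the paper: the upper bound $\int_0^1 I(\rho_t^\eps)\,dt \leq I(\rho)$ comes from plugging the constant competitor into \eqref{eq:symm-ent-cost-BB}, and the lower bound from lower semicontinuity of the Fisher information together with Fatou's lemma. The one point of departure is how you justify $\rho_t^\eps \to \rho$. The paper simply cites \cite[Theorem~3.7(3)]{leo-sb-to-kp12} for weak convergence of the entropic interpolation, whereas you extract it from \eqref{eq:symm-ent-cost-BB} itself by bounding the integrated kinetic energy and using that $v_t^\eps$ controls the metric derivative, arriving at the quantitative estimate $\Was{2}(\rho_t^\eps,\rho) \leq \tfrac{\eps}{2}\sqrt{I(\rho)}$. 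Your route is more self-contained and gives an explicit rate as a bonus; the paper's route is a one-line citation. Either way the argument closes identically.
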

\begin{proof}
As $-\infty < \Ent(\rho) < +\infty$, the entropic interpolation $(\rho_{s}^{\eps}, s \in [0,1])$ exists for all $\eps > 0$. By \cite[Theorem 3.7(3)]{leo-sb-to-kp12}, for each $s \in [0,1]$, $\rho_s^\eps$ converges weakly to $\rho$ as $\eps \downarrow 0$. By the lower semicontinuity of Fisher information with respect to weak convergence \cite[Proposition 14.2]{bobkov-fisher-22} and Fatou's Lemma,
\begin{align*}
    I(\rho) &= \int_{0}^{1} I(\rho) dt \leq \int_0^1 \liminf\limits_{\eps \downarrow 0} I(\rho_{t}^{\eps}) dt \leq \liminf\limits_{\eps \downarrow 0} \int_0^1 I(\rho_{t}^{\eps}) dt.
\end{align*}
Lastly, by \eqref{eq:symm-ent-cost-BB} and the submoptimality of the constant curve $\rho_t = \rho$, $v_t = 0$ for $t \in [0,1]$
\begin{align*}
    \frac{1}{2\eps}\int_{0}^{1} \|v_{t}^{\eps}\|_{L^{2}(\rho_t^{\eps})}^2 dt + \frac{\eps}{8}\int_{0}^{1}I(\rho_{t}^{\eps})dt \leq \frac{\eps}{8}I(\rho) \Rightarrow \int_{0}^{1} I(\rho_{t}^{\eps})dt \leq I(\rho).
\end{align*}
Altogether, it follows
\begin{align*}
    0 \leq \limsup\limits_{\eps \downarrow 0} \left( I(\rho)- \int_0^1 I(\rho_{t}^{\eps}) dt\right) &= I(\rho) - \liminf\limits_{\eps \downarrow 0} \int_0^1 I(\rho_{t}^{\eps}) dt \leq I(\rho) - I(\rho) = 0.
\end{align*}
\end{proof}

\section{Approximation of Schr\"{o}dinger Bridge by Langevin Diffusion}\label{sec:sb-ld}
Our main result of the section is the relative entropy bound between $\LDstatic{\rho}{\eps}$ and $\SBstatic{\rho}{\eps}$ given in Theorem \ref{thm:ld-sb-general-g}.

\begin{assumption}\label{assumption:LD_SB_relative_entropy}
Let $\rho = e^{-g}$ for $g: \mathbb{R}^{d} \to \mathbb{R}$, and recall that $\cU = \frac{1}{8}\|\nabla g\|^2-\frac{1}{4}\Delta g$. Assume that
\begin{itemize}
    \item[(1)] $\rho$ is subexponential (\cite[Definition 2.7.5]{vershynin-hdp}), $g \in C^{3}(\mathbb{R}^{d})$, $\cU$ is bounded below, $g \to \infty$ as $\|x\| \to \infty$.
    \item[(2)] There exists $C > 0$ and $n \geq 1$ such that $\|\nabla \cU(x)\|^2 \leq C(1+\|x\|^n)$.
    
    \item[(3)] $-\infty <\Ent(\rho) < +\infty$ and $I(\rho) < +\infty$.
\end{itemize}
\end{assumption}
Note that Assumption \ref{assumption:LD_SB_relative_entropy} (3) ensures the existence of $\SBstatic{\rho}{\eps}$ and that \eqref{eq:symm-ent-cost-BB} holds \cite[(Exi),(Reg1),(Reg2)]{gentil2017analogy}.

These assumptions are, for instance, satisfied by the multivariate Gaussian $N(\mu,\Sigma)$ with $\mu \in \mathbb{R}^{d}$, $\Sigma \in \mathbb{R}^{d \times d}$ positive definite. This provides an essential class of examples as explicit computations of the $\Schro$ bridge between Gaussians are known \cite{janati2020}. In this instance, $g(x) = \frac{1}{2}(x-\mu)^{T}\Sigma^{-1}(x-\mu)+\frac{1}{2}\log((2\pi)^d \det \Sigma)$ and thus
\begin{align*}
    \cU(x) = \frac{1}{8}(x-\mu)^{T}\Sigma^{-2}(x-\mu)-\frac{1}{4}\Tr(\Sigma^{-1}) &\text{ and } \nabla \cU(x) = \frac{1}{4}\Sigma^{-2}(x-\mu). 
\end{align*}
Importantly, for $X \sim N(\mu,\Sigma)$ this establishes that $\nabla \cU(X) \sim N(0,\frac{1}{16}\Sigma^{-3})$ and thus $\Exp{}\|\nabla \cU(X)\|^{2} = \frac{1}{16} \Tr(\Sigma^{-3})$. More generally, when $g$ is polynomial such that $\int_{\mathbb{R}^{d}} e^{-g} < +\infty$ these assumptions are also satisfied.

Additionally, we remark that the class of functions $g$ satisfying Assumption \ref{assumption:LD_SB_relative_entropy} is stable under the addition of functions $h \in C^{3}(\mathbb{R}^{d})$ such that $h$ and all of its derivatives up to and including order $3$ are bounded (e.g.\ $h \in C_{c}^{3}(\mathbb{R}^{d})$). It is also stable under the addition of linear and quadratic functions, so long as the resulting function has a finite integral when exponentiated.

We begin with a preparatory lemma establishing some important integrability properties we make use of in the proof of Theorem \ref{thm:ld-sb-general-g}.
\begin{lemma}\label{lem:integ-entropic-inter}
Let $\rho = e^{-g} \in \cP_{2}(\mathbb{R}^{d})$ satisfying Assumption \ref{assumption:LD_SB_relative_entropy}.
\begin{itemize}
    \item[(1)] For each $\eps > 0$, the densities along the entropic interpolation $(\rho_{s}^{\eps}, s \in [0,1])$ are uniformly subexponential, meaning that for each $T > 0$ there is a constant $K > 0$ such that for $X_{s}^{\eps} \sim \rho_{s}^{\eps}$ we have $\sup\limits_{\eps \in (0,T)}\sup\limits_{s \in [0,1]} \|X_{s}^{\eps}\|_{\psi_{1}} \leq K$, where $\|.\|_{\psi_{1}}$ is the subexponential norm defined in \cite[Definition 2.7.5]{vershynin-hdp}. 
    \item[(2)] The following integrability conditions hold:
    \begin{itemize}
        \item[(a)] The function $\cU = \frac{1}{8}\|\nabla g\|^{2}-\frac{1}{4}\Delta g$ is in $L^{2}(\rho_{s}^{\eps})$ for all $s \in [0,1]$.
        \item[(b)] The function from $[0,1] \times C^{d}[0,1]$ to $\mathbb{R}$ given by $(t,\omega) \mapsto \cU(\omega_t)$ is in $L^{2}(dt \otimes d\SBdynam{\rho}{\eps})$. Similarly, $(t,\omega) \mapsto \nabla \cU(\omega_t)$ is in $L^{2}(dt \otimes d\SBdynam{\rho}{\eps})$.
        \item[(c)] For any $T > 0$, $\sup\limits_{\eps \in (0,T)} \int_{0}^{1} \Exp{\SBdynam{\rho}{\eps}}[\|\nabla \cU(\omega_s)\|^2]ds < +\infty$.
    \end{itemize}
\end{itemize}
\end{lemma}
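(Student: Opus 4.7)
The plan for part (1) is to leverage the explicit representation in \eqref{eq:rv-entropic-inter}: $X_t^\eps = (1-t)X_\eps + tY_\eps + \sqrt{\eps t(1-t)}\, Z$, where $(X_\eps, Y_\eps) \sim \SBstatic{\rho}{\eps}$ and $Z \sim N(0,\Id)$ is independent of this pair. Because $X_\eps$ and $Y_\eps$ each have marginal law $\rho$, which is subexponential by Assumption~\ref{assumption:LD_SB_relative_entropy}(1), we have $\|X_\eps\|_{\psi_1} = \|Y_\eps\|_{\psi_1} = \|\rho\|_{\psi_1}$, independently of $\eps$ and of the coupling. The triangle inequality for the $\psi_1$-Orlicz norm then gives, for every $\eps \in (0,T)$ and $t \in [0,1]$,
\begin{equation*}
    \|X_t^\eps\|_{\psi_1} \leq \|\rho\|_{\psi_1} + \sqrt{\eps t(1-t)}\, \|Z\|_{\psi_1} \leq \|\rho\|_{\psi_1} + \tfrac{1}{2}\sqrt{T}\, \|Z\|_{\psi_1} =: K,
\end{equation*}
and $\|Z\|_{\psi_1}$ is a finite dimensional constant because $\|Z\|$ is sub-Gaussian.

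For parts (2)(a) and (2)(c), I would first upgrade Assumption~\ref{assumption:LD_SB_relative_entropy}(2) into a polynomial growth bound on $\cU$ itself. Taking the square root gives $\|\nabla \cU(x)\| \leq C_1(1 + \|x\|^{n/2})$, and since $g \in C^{3}(\mathbb{R}^{d})$ the function $\cU$ is $C^1$ with $\cU(0)$ finite; integrating along the segment from $0$ to $x$ then yields
\begin{equation*}
    |\cU(x)| \leq |\cU(0)| + \int_0^1 \|\nabla \cU(rx)\| \cdot \|x\|\, dr \leq C_2(1 + \|x\|^{n/2+1}).
\end{equation*}
Consequently both $|\cU|^{2}$ and $\|\nabla \cU\|^{2}$ are dominated by polynomials in $\|x\|$. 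Combined with the uniform subexponentiality of $X_s^\eps$ from part (1), the standard estimate $E|X|^{p} \leq C_p \|X\|_{\psi_1}^{p}$ (\cite[Proposition 2.7.1]{vershynin-hdp}) produces a bound on $\Exp{\rho_s^\eps}[|\cU|^{2}]$ and $\Exp{\rho_s^\eps}[\|\nabla \cU\|^{2}]$ that is uniform in $s \in [0,1]$ and $\eps \in (0,T)$, yielding (2)(a) and, after integrating in $s$, (2)(c).

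Part (2)(b) then follows immediately from Fubini: the squared $L^{2}(dt \otimes d\SBdynam{\rho}{\eps})$ norm of $(t,\omega) \mapsto \cU(\omega_t)$ equals $\int_{0}^{1} \Exp{\rho_{t}^{\eps}}[|\cU|^{2}]\, dt$, which is finite by the estimate above, and the identical argument handles $\nabla \cU$. The only non-routine ingredient I foresee is the use of the triangle inequality for the $\psi_1$-norm on the dependent pair $(X_\eps, Y_\eps)$; this is legitimate because $\|\cdot\|_{\psi_1}$ is a genuine norm on the Orlicz space over any fixed ambient probability space, so no independence between $X_\eps$ and $Y_\eps$ is required.
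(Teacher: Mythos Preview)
Your proposal is correct and follows essentially the same route as the paper: both use the representation \eqref{eq:rv-entropic-inter} together with the triangle inequality for $\|\cdot\|_{\psi_1}$ to get the uniform subexponential bound, and then combine polynomial growth of $\cU$ and $\nabla\cU$ with the fact that subexponential random variables have all polynomial moments. The only cosmetic difference is that you spell out explicitly how polynomial growth of $\cU$ is inherited from that of $\nabla\cU$ via integration along a segment, whereas the paper simply asserts it.
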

\begin{proof}
Fix $\eps > 0$ and $s \in [0,1]$. Let $(X,Y)$ be random variables with joint law $\SBstatic{\rho}{\eps}$. Let $Z \sim N(0,\Id)$ be independent of $(X,Y)$. Then $X_{s}^{\eps}$ as given in \eqref{eq:rv-entropic-inter} has law $\rho_{s}^{\eps}$. 
As $\|\cdot\|_{\psi_{1}}$ is a norm, observe that for any $T > 0$
\begin{align*}
    \sup\limits_{\eps \in (0,T], s \in [0,1]}\|X_{s}^{\eps}\|_{\psi_{1}} &\leq \sup\limits_{\eps \in (0,T], s \in [0,1]} \left((1-s)\|X\|_{\psi_{1}}+s\|Y\|_{\psi_{1}}+\sqrt{\eps s(1-s)}\|Z\|_{\psi_{1}}\right) < +\infty 
\end{align*}
establishing the uniform upper bound on the subexponential norm. 

It then follows that
\begin{align*}
    \sup\limits_{\eps \in (0,T], s \in [0,1]}\Exp{}[\|\nabla \cU(X^{\eps}_{s})\|^2] &\leq \sup\limits_{\eps \in (0,T], s \in [0,1]}C(1+\Exp{}[\|X_{s}^{\eps}\|^n]) <+\infty. 
\end{align*} 
This observation also establishes that $(t,\omega) \mapsto \nabla \cU(\omega_t)$ is in $L^{2}(dt \otimes d\SBdynam{\rho}{\eps})$. 
By assumption on $\nabla \cU$, $\cU$ is also of uniform polynomial growth, and thus the same argument establishes the remaining claims. 
\end{proof}

\begin{theorem}\label{thm:ld-sb-general-g}
    For $\rho \in \cP_2(\R^d)$ satisfying Assumption \ref{assumption:LD_SB_relative_entropy}(1,3), for all $\eps > 0$
    \begin{align}\label{eq:rel-ent-ld-sb}
        H(\LDstatic{\rho}{\eps} | \SBstatic{\rho}{\eps})+H(\SBstatic{\rho}{\eps} | \LDstatic{\rho}{\eps}) \leq \frac{1}{2}\eps^2 \left(I(\rho) - \int_{0}^{1}I(\rho_t^\eps) dt\right)^{1/2}\left(\int_{0}^{1} \Exp{\rho_t^\eps}\|\nabla \cU\|^2 dt\right)^{1/2}.
    \end{align}
    Thus, under Assumption \ref{assumption:LD_SB_relative_entropy}(2), 
    $
    \lim\limits_{\eps \downarrow 0} \eps^{-2}\left(H(\LDstatic{\rho}{\eps} | \SBstatic{\rho}{\eps})+H(\SBstatic{\rho}{\eps} | \LDstatic{\rho}{\eps})\right) = 0.
    $
\end{theorem}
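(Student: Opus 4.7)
The plan is to lift both $\LDstatic{\rho}{\eps}$ and $\SBstatic{\rho}{\eps}$ to path-space measures and exploit the data processing inequality. Let $Q$ denote the law on $C^d[0,\eps]$ of the stationary Langevin diffusion, and let $\SBdynam{\rho}{\eps}$ denote the dynamic Schr\"odinger bridge viewed on $C^d[0,\eps]$ via the natural time rescaling from \eqref{eq:dynSB-defn}. Then $(\omega_0,\omega_\eps)$ has law $\LDstatic{\rho}{\eps}$ under $Q$ and $\SBstatic{\rho}{\eps}$ under $\SBdynam{\rho}{\eps}$, so data processing yields
\[
H(\LDstatic{\rho}{\eps}|\SBstatic{\rho}{\eps}) + H(\SBstatic{\rho}{\eps}|\LDstatic{\rho}{\eps}) \leq H(Q|\SBdynam{\rho}{\eps}) + H(\SBdynam{\rho}{\eps}|Q),
\]
and it suffices to bound the symmetric relative entropy at the path level.

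Next I would compute $\log(dQ/d\SBdynam{\rho}{\eps})(\omega)$ by expressing both numerator and denominator as Radon--Nikodym derivatives against a common reference, namely the Brownian motion on $[0,\eps]$ with initial distribution $\rho$. The former ratio is \eqref{eq:rn-LD-BM} after unconditioning $\omega_0 \sim \rho$, and the latter follows from \eqref{eq:dynSB-defn} together with $a^\eps(x) = \exp(f_\eps(x)/\eps + \log\rho(x))$. After simplification, $\log(dQ/d\SBdynam{\rho}{\eps})(\omega)$ decomposes as the path integral $-\int_0^\eps \cU(\omega_s)\,ds$ plus terms that depend only on $\omega_0$ or only on $\omega_\eps$. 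Since $Q$ and $\SBdynam{\rho}{\eps}$ share the marginal $\rho$ at each endpoint, the one-variable terms integrate to the same value under both measures and drop out of $H(Q|\SBdynam{\rho}{\eps}) + H(\SBdynam{\rho}{\eps}|Q) = \int \log(dQ/d\SBdynam{\rho}{\eps})\,d(Q - \SBdynam{\rho}{\eps})$. Using stationarity of $Q$ and the fact that the dynamic SB has marginal $\rho_t^\eps$ at time $\eps t$, the change of variable $s = \eps t$ gives
\[
H(Q|\SBdynam{\rho}{\eps}) + H(\SBdynam{\rho}{\eps}|Q) = \eps \int_0^1 \int \cU\, d(\rho_t^\eps - \rho)\,dt.
\]

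The remaining task is a first-order expansion of this integral. Applying the continuity equation for the entropic interpolation \eqref{eq:cont-eq-entropic-inter} followed by integration by parts gives $\int \cU\, d(\rho_t^\eps - \rho) = \int_0^t \int \nabla \cU \cdot v_u^\eps\, d\rho_u^\eps\, du$, and swapping the order of integration in $(u,t)$ produces a weight $(1-s)$. Two applications of Cauchy--Schwarz (first in $L^2(\rho_s^\eps)$, then in $s$) bound the resulting expression by $\left(\int_0^1 \|\nabla \cU\|_{L^2(\rho_s^\eps)}^2\, ds\right)^{1/2}\left(\int_0^1 \|v_s^\eps\|_{L^2(\rho_s^\eps)}^2\, ds\right)^{1/2}$. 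To control the kinetic energy I would invoke the entropic Benamou--Brenier formula \eqref{eq:symm-ent-cost-BB} with the trivial competitor $(\rho_t \equiv \rho, v_t \equiv 0)$, which is admissible precisely because both endpoints are $\rho$; suboptimality of the entropic interpolation then yields $\int_0^1 \|v_s^\eps\|_{L^2(\rho_s^\eps)}^2\, ds \leq \tfrac{\eps^2}{4}\left(I(\rho) - \int_0^1 I(\rho_t^\eps)\,dt\right)$. Assembling these estimates and restoring the outer factor of $\eps$ delivers \eqref{eq:rel-ent-ld-sb}, after which the asymptotic claim follows by combining Lemma \ref{lem:integrated-fisher-continuity} (which forces the Fisher-information factor to vanish) with Lemma \ref{lem:integ-entropic-inter}(2)(c) (which keeps the $\nabla \cU$ factor uniformly bounded in $\eps$).

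The conceptually clean step is the endpoint cancellation inside $\log(dQ/d\SBdynam{\rho}{\eps})$; the main technical obstacle is justifying each manipulation on the unbounded state space $\R^d$ --- the Girsanov identity for $Q$, the integration by parts against the polynomially growing $\nabla \cU$, and the uniform-in-$\eps$ integrability along the entropic interpolation. Assumption \ref{assumption:LD_SB_relative_entropy} together with Lemma \ref{lem:integ-entropic-inter} is designed precisely to close these gaps, and it is the interplay between polynomial growth of $\|\nabla \cU\|^2$ and uniform subexponentiality of the marginals $\rho_s^\eps$ that makes the final Cauchy--Schwarz estimate robust as $\eps \downarrow 0$.
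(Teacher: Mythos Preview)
Your proposal is correct and reaches exactly the same intermediate quantity as the paper, namely
\[
H(\LDstatic{\rho}{\eps}|\SBstatic{\rho}{\eps})+H(\SBstatic{\rho}{\eps}|\LDstatic{\rho}{\eps}) \;\leq\; \eps \int_{0}^{1} \bigl(\Exp{\rho_t^\eps}[\cU]-\Exp{\rho}[\cU]\bigr)\,dt,
\]
after which the continuity-equation/Cauchy--Schwarz/entropic Benamou--Brenier steps are identical to the paper's Step~3. The route to this intermediate bound, however, is genuinely different. The paper stays at the static level: it computes the exact identity \eqref{eq:sym-ent-identity}, decomposes $c(x,y,\eps)=\eps\,\cU(x)+R(x,y,\eps)$, uses a separate path-space data-processing argument to bound $\Exp{\LDstatic{\rho}{\eps}}[-c]$ by $\tfrac{\eps}{8}I(\rho)$, and finally applies Jensen's inequality \emph{inside} the logarithm defining $R$. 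Your argument replaces this entire chain by a single lift: data processing gives $H(\LDstatic{\rho}{\eps}|\SBstatic{\rho}{\eps})+H(\SBstatic{\rho}{\eps}|\LDstatic{\rho}{\eps}) \leq H(Q|\SBdynam{\rho}{\eps})+H(\SBdynam{\rho}{\eps}|Q)$, and then the observation that $\log(dQ/d\SBdynam{\rho}{\eps})$ is endpoint-only plus $-\int_0^\eps \cU(\omega_s)\,ds$ turns the right-hand side into an \emph{equality} with the displayed quantity. So your argument trades two inequalities (the paper's $\leq \Exp{\SBstatic{\rho}{\eps}}[R]$ and the Jensen step on $R$) for one (data processing), and bypasses the auxiliary functions $c$ and $R$ altogether. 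This is conceptually cleaner; on the other hand, the paper's static identity \eqref{eq:sym-ent-identity} is exact and could in principle be exploited for sharper bounds, and its intermediate computation $\Exp{\rho}[\cU]=-\tfrac{1}{8}I(\rho)$ is what yields Corollary~\ref{cor:CTextension} as a by-product. The technical caveats you flag (Girsanov, integration by parts against polynomially growing $\nabla\cU$, uniform integrability along the interpolation) are the same ones the paper handles via Lemma~\ref{lem:integ-entropic-inter} and the density argument in Step~3.
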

Before proving the theorem we pause to comment on the sharpness of the inequality presented in \eqref{eq:rel-ent-ld-sb}. As the $\Schro$ bridge is explicitly known for Gaussian marginals \cite{janati2020}, consider the case $\rho = N(0,1)$. Define two matrices
\begin{align*}
    \Sigma_1 =  \begin{pmatrix}
        1 & e^{-\eps/2} \\ e^{-\eps/2} & 1
    \end{pmatrix}, \Sigma_2 = \begin{pmatrix}
        1 & \frac{1}{2}(\sqrt{\eps^2+4}-\eps) \\ \frac{1}{2}(\sqrt{\eps^2+4}-\eps) & 1 
    \end{pmatrix}.
\end{align*}
It is well known that $\LDstatic{\rho}{\eps} \sim N(0,\Sigma_1)$ is the joint density of the stationary Ornstein-Uhlenbeck process at times $0$ and $\eps$. By \cite{janati2020}, it is known that $\SBstatic{\rho}{\eps} \sim N(0,\Sigma_2)$. One then computes
\begin{align}\label{eq:gaussiancompute}
    H(\LDstatic{\rho}{\eps}|\SBstatic{\rho}{\eps}) + H(\SBstatic{\rho}{\eps}|\LDstatic{\rho}{\eps}) = \frac{1}{2}\Tr(\Sigma_{1}^{-1}\Sigma_{2})+\frac{1}{2}\Tr(\Sigma_{2}^{-1}\Sigma_{1}) - 2= \frac{1}{1152}\eps^{4}+\cO(\eps^{5}). 
\end{align}
On the other hand, the entropic interpolation between Gaussians is computed in \cite{gentil2017analogy}, and $I(\rho_{t}^{\eps})$ is minimized at $t = 1/2$. From these computations, it can be shown that
\begin{align*}
    I(\rho) - \int_{0}^{1}I(\rho_t^\eps) dt \leq I(\rho) - I(\rho_{1/2}^{\eps}) = \cO(\eps^2). 
\end{align*}
Altogether, the RHS of \eqref{eq:rel-ent-ld-sb} is then $\cO(\eps^3)$. That is, the transition densities for the OU process provide a tighter approximation (an order of magnitude) for same marginal $\Schro$ bridge than given by Theorem \ref{thm:ld-sb-general-g}.

Lastly, we note that it would be desirable to have quantitative bounds on the right hand side of \eqref{eq:rel-ent-ld-sb}. This is a delicate task as quantitative results concerning the convergence of Fisher information do not appear to be established outside of specific contexts that do not include the one considered in this paper, see for instance \cite[Theorem 1.5]{conforti2019second}. 

\begin{proof}[Proof of Theorem \ref{thm:ld-sb-general-g}]
We argue in the following manner. First, using the densities for $\SBstatic{\rho}{\eps}$ and $\LDstatic{\rho}{\eps}$ given in (\ref{eq:LD-density}) and (\ref{eq:SB-density}), respectively, we compute their likelihood ratio and obtain exact expressions for the two relative entropy terms appearing in the statement of the Theorem. The resulting expression is the difference in expectation of the expression $c(x,y,\eps)$ defined in (\ref{eq:bridge-exp}) with respect to the Schr\"{o}dinger bridge and Langevin diffusion. We then extract out the leading order terms from both expectations and show that they cancel. To obtain the stated rate of decay, we then analyze the remaining term with an analytic argument from the continuity equation. 

\noindent\textbf{Step 1.} The following holds    
    \begin{align}\label{eq:sym-ent-identity}
        H(\LDstatic{\rho}{\eps}{} | \SBstatic{\rho}{\eps})+H(\SBstatic{\rho}{\eps} | \LDstatic{\rho}{\eps}) &= \Exp{\SBstatic{\rho}{\eps}{}}[c(X,Y,\eps)]-\Exp{\LDstatic{\rho}{\eps}{}}[c(X,Y,\eps)].
    \end{align}
To prove \eqref{eq:sym-ent-identity}, from (\ref{eq:LD-density}) and (\ref{eq:SB-density})
    \begin{align*}
        \frac{d\LDstatic{\rho}{\eps}{}}{d\SBstatic{\rho}{\eps}{}} = \frac{1}{\sqrt{\rho(x)\rho(y)}}\exp\left(-\frac{1}{\eps}f_{\eps}(x)-\frac{1}{\eps}f_{\eps}(y)-c(x,y,\eps)\right).
    \end{align*}
As both $\SBstatic{\rho}{\eps}{}, \LDstatic{\rho}{\eps}{} \in \Pi(\rho,\rho)$, we have that
\begin{align*}
    H(\LDstatic{\rho}{\eps}{} | \SBstatic{\rho}{\eps}) &= \Exp{\LDstatic{\rho}{\eps}{}}\left[\log \frac{d\LDstatic{\rho}{\eps}{}}{d\SBstatic{\rho}{\eps}{}}\right] = - \frac{2}{\eps}\Exp{\rho}[f_{\eps}(X)] -\Ent(\rho) - \Exp{\LDstatic{\rho}{\eps}{}}[c(X,Y,\eps)], \\
    H(\SBstatic{\rho}{\eps}|\LDstatic{\rho}{\eps}{}) &= \Exp{\SBstatic{\rho}{\eps}}\left[-\log \frac{d\LDstatic{\rho}{\eps}{}}{d\SBstatic{\rho}{\eps}{}}\right] = \frac{2}{\eps}\Exp{\rho}[f_{\eps}(X)] + \Ent(\rho) + \Exp{\SBstatic{\rho}{\eps}{}}[c(X,Y,\eps)].
\end{align*}
Equation \eqref{eq:sym-ent-identity} follows by adding these expressions together. 

\noindent\textbf{Step 2.} Recall $R_{\eps}$ as used in \eqref{eq:dynSB-defn} and let
\begin{align}\label{eq:SB-LD-remainder}
    R(x,y,\eps) &:= -\log\left(\Exp{R_{\eps}}\left[\exp\left(-\int_{0}^{1} \eps \left(\cU(\omega_s)-\cU(\omega_0)\right)ds \right)\middle| \omega_0 = x, \omega_1 = y\right]\right).
\end{align}
Then we claim that 
\begin{align}\label{eq:bound-remainder}
    H(\LDstatic{\rho}{\eps}{} | \SBstatic{\rho}{\eps})+H(\SBstatic{\rho}{\eps} | \LDstatic{\rho}{\eps}) &\leq \Exp{\SBstatic{\rho}{\eps}{}}[R(X,Y,\eps)].
\end{align}

Recall that $W_{x}$ is Wiener measure starting from $x$, and
\begin{align*}
    c(x,y,\eps) &= - \log\left(\Exp{W_{x}}\left[\exp\left(-\int_{0}^{\eps} \cU(\omega_s)ds\right) \middle|  \; \omega_\eps = y \right]\right) \\
        &= \eps \cU(x) - \log\left(\Exp{W_{x}}\left[\exp\left(-\int_{0}^{\eps} \left(\cU(\omega_s)-\cU(\omega_0)\right)ds\right)\middle|\; \omega_{\eps} = y\right]\right).
\end{align*}
For the integral inside the $\exp\left(\cdot\right)$, we rescale $s$ from $[0,\eps]$ to $[0,1]$. By Brownian scaling the law of $(\omega_{\eps s}, s \geq 0)$ under $W_{x}$ is equal to the law of $(\omega_s, s \geq 0)$ under $R_{\eps}$ with initial condition $x$, which gives
    $c(x,y,\eps) = \eps \cU(x) + R(x,y,\eps)$.
As $\SBstatic{\rho}{\eps}{} \in \Pi(\rho,\rho)$, thanks to an integration by parts, 
\begin{align*}
    \Exp{\SBstatic{\rho}{\eps}{}}[\cU(X)] &= \int_{\R^d} \cU(x)\rho(x)dx= \frac{1}{8}I(\rho)-\int_{\R^d} \frac{1}{4} \Delta g(x) e^{-g(x)}dx \\
    &= \frac{1}{8}I(\rho) - \int_{\R^d} \frac{1}{4} \|\nabla g(x)\|^{2} e^{-g(x)}dx  = -\frac{1}{8}I(\rho). 
\end{align*}
Thus from \eqref{eq:sym-ent-identity} observe
\begin{align*}
    \Exp{\SBstatic{\rho}{\eps}{}}[c(X,Y,\eps)]-\Exp{\LDstatic{\rho}{\eps}{}}[c(X,Y,\eps)] &\leq -\frac{\eps}{8}I(\rho) + \Exp{\SBstatic{\rho}{\eps}}[R(X,Y,\eps)]-\Exp{\LDstatic{\rho}{\eps}{}}[c(X,Y,\eps)].
\end{align*}
Thus, to prove \eqref{eq:bound-remainder} it will suffice to show
\begin{align}\label{eq:ld-c-upperbdd}
    \Exp{\LDstatic{\rho}{\eps}{}}[-c(X,Y,\eps)] \leq \frac{\eps}{8}I(\rho).
\end{align}
Observe that
\begin{align*}
    \frac{d\LDstatic{\rho}{\eps}{}}{d\BMstatic{\rho}{\eps}{}} &= \frac{\sqrt{\rho(y)}}{\sqrt{\rho(x)}}\exp(-c(x,y,\eps)).
\end{align*}
Thus, as $\LDstatic{\rho}{\eps} \in \Pi(\rho,\rho)$ and $-\infty < \Ent(\rho) < +\infty$, 
\begin{align}\label{eq:rel-ent-ld-bm}
    H(\LDstatic{\rho}{\eps}{}|\BMstatic{\rho}{\eps}{}) &= \Exp{\LDstatic{\rho}{\eps}{}}\left[\log \frac{d\LDstatic{\rho}{\eps}{}}{d\BMstatic{\rho}{\eps}{}}\right] =\Exp{\LDstatic{\rho}{\eps}{}}\left[-c(X,Y,\eps)\right]. 
\end{align}
On $C^{d}[0,\eps]$ let $W_{\rho}$ denote the law of Brownian motion with initial distribution $\rho$. By a well-known formula (see \cite[Equation (5.1)]{DGW} for instance)
\begin{align*}
    H(Q_x|W_x) &= \frac{1}{2} \Exp{Q_x}\left[\int_{0}^{\eps} \|-\frac{1}{2}\nabla g(\omega_s)\|^2 ds\right] = \frac{1}{8}\int_0^{\eps} \Exp{Q_x}[\|\nabla g(\omega_s)\|^2]ds. 
\end{align*}
It then follows from the factorization of relative entropy that
\begin{align*}
    H(Q|W_{\rho}) &= \int H(Q_x|W_x)\rho(x)dx = \frac{1}{8}\int \left(\int_0^{\eps} \Exp{Q_x}\left[\|\nabla g(\omega_s)\|^2\right]ds \right)\rho(x)dx \\
    &= \frac{1}{8}\int_{0}^{\eps}E_{Q}\left[\|\nabla g(\omega_s)\|^2\right]ds = \frac{\eps}{8}I(\rho). 
\end{align*}
As $\LDstatic{\rho}{\eps} = (\pi_0,\pi_{\eps})_{\#}Q$, \eqref{eq:ld-c-upperbdd} follows by \eqref{eq:rel-ent-ld-bm} and the information processing inequality (see \cite[Lemma 9.4.5]{ambrosio2005gradient} for instance).

\noindent\textbf{Step 3.} Show $\Exp{\SBstatic{\rho}{\eps}{}}[R(X,Y,\eps)]$ is bounded above by the RHS in \eqref{eq:rel-ent-ld-sb}. 

To start, apply Jensen's inequality to interchange the $-\log$ and $\Exp{R_{\eps}}[\; \cdot \; |\;\omega_0 = x, \omega_1= y]$ in the expression for $R(x,y,\eps)$ in \eqref{eq:SB-LD-remainder} to obtain
\begin{align*}
    R(x,y,\eps) &\leq \eps\Exp{R_{\eps}}\left[\int_{0}^{1} \left(\cU(\omega_s)-\cU(\omega_0)\right)ds \middle| \; \omega_0 = x,\omega_1 = y\right].
\end{align*}
Since the dynamic $\Schro$ bridge, $\SBdynam{\rho}{\eps}$ defined in \eqref{eq:dynSB-defn}, disintegrates as the Brownian bridge once its endpoints are fixed \cite[Proposition 2.3]{schroLeonard13}, we have that
\begin{align}\label{eq:rxyeps-bound}
    \Exp{\SBstatic{\rho}{\eps}{}}[R(X,Y,\eps)] &\leq \eps \Exp{\SBstatic{\rho}{\eps}{}}\left[\Exp{R_{\eps}}\left[\int_{0}^{1} \left(\cU(\omega_s)-\cU(\omega_0)\right)ds \middle| \; \omega_0 = X,\omega_1 = Y\right]\right]\\
    &= \eps \int_{0}^{1} \Exp{\SBdynam{\rho}{\eps}}\left[\cU(\omega_s)-\cU(\omega_0)\right]ds,
\end{align}
where the last equality follows from Fubini's Theorem. 

Recall that $(\rho_{t}^{\eps},v_{t}^{\eps})_{t \in [0,1]}$ is a weak solution for the continuity equation \eqref{eq:continuity-equation} if for all $\psi \in C_{c}^{1}(\mathbb{R}^{d})$ and $t \in [0,1]$,
$\frac{d}{dt} \Exp{\rho_{t}^{\eps}}[\psi] = \Exp{\rho_{t}^{\eps}}[\langle v_{t}^{\eps}, \nabla \psi \rangle]$.

Integrating in time over $[0,s]$ for $s \in (0,1]$ gives
\begin{align*}
    \Exp{\rho_{s}^{\eps}}[\psi] - \Exp{\rho_{0}^{\eps}}[\psi] &= \int_{0}^{s} \Exp{\rho_{u}^{\eps}}[\langle v_{u}^{\eps}, \nabla \psi \rangle] du.
\end{align*}
We now use a density argument to extend the above identity to
\begin{align}\label{eq:cont-eqn-for-U}
    \Exp{\SBdynam{\rho}{\eps}}[\cU(\omega_s)-\cU(\omega_0)] = \Exp{\rho_{s}^{\eps}}[\cU] - \Exp{\rho_{0}^{\eps}}[\cU] &= \int_{0}^{s} \Exp{\rho_{u}^{\eps}}[\langle v_{u}^{\eps}, \nabla \cU \rangle]du.
\end{align}
For $R > 0$ let $\chi_R: \mathbb{R}^{d} \to \mathbb{R}$ be such that $\chi_R \in C_{c}^{\infty}(\mathbb{R}^{d})$, $0 \leq \chi_R \leq 1$, $\left.\chi_{R}\right|_{B(0,R)} \equiv 1$, and $\|\nabla \chi_R\|_{\infty} \leq 2$. As $\chi_{R} \cU \in C_{c}^{1}(\mathbb{R}^{d})$,
\begin{align*}
    &\Exp{\SBdynam{\rho}{\eps}}[(\chi_{R}\cU)(\omega_s)-(\chi_{R}\cU)(\omega_0)] = \int_{0}^{s} \Exp{\SBdynam{\rho}{\eps}}[\langle v_{u}^{\eps}(\omega_u), \nabla (\chi_{R}\cU)(\omega_u) \rangle]du \\
    &= \int_{0}^{s} \Exp{\SBdynam{\rho}{\eps}}\left[\langle v_u^{\eps},\nabla \cU \rangle \chi_{R}\right]ds + \int_{0}^{s} \Exp{\SBdynam{\rho}{\eps}}\left[\langle v_u^{\eps}, \nabla \chi_{R}\rangle \cU \right]ds.
\end{align*}
By Lemma \ref{lem:integ-entropic-inter}, $\int_{0}^{1} \Exp{\SBdynam{\rho}{\eps}}[\|\nabla \cU(\omega_s)\|^2]ds < +\infty$. Moreover, it is shown in \eqref{eq:integrated-velocity-bound} that $\int_{0}^{1} \Exp{\SBdynam{\rho}{\eps}}[\|v_s^{\eps}(\omega_s)\|^2] ds < +\infty$. Thus, by the Dominated Convergence Theorem
\begin{align*}
    \lim\limits_{R \uparrow +\infty} \int_{0}^{s} \Exp{\SBdynam{\rho}{\eps}}\left[\langle v_u^{\eps},\nabla \cU \rangle \chi_{R}\right]ds &= \int_{0}^{s} \Exp{\SBdynam{\rho}{\eps}}\left[\langle v_u^{\eps},\nabla \cU \rangle \right]ds.
\end{align*}
As $\abs{\chi_{R}\cU(\omega_s)+\chi_{R}\cU(\omega_0)} \leq \abs{\cU(\omega_s)}+\abs{\cU(\omega_0)}$ and 
\begin{align*}
    \abs{\langle v_u^{\eps}, \nabla \chi_{R}\rangle \cU} \leq 2\|v_{u}^{\eps}\|\abs{\cU} \leq \|v_{u}^{\eps}(\omega_u)\|^2+\abs{\cU(\omega_u)}^2
\end{align*}
and both upper bounds are integrable in their suitable spaces (again by Lemma \ref{lem:integ-entropic-inter} and \eqref{eq:integrated-velocity-bound}), \eqref{eq:cont-eqn-for-U} follows from the dominated convergence theorem by sending $R \uparrow +\infty$. 

Integrating \eqref{eq:cont-eqn-for-U} once more in $s$ over $[0,1]$, multiplying by $\eps$, and plugging in \eqref{eq:cont-eq-entropic-inter} gives the RHS of \eqref{eq:rxyeps-bound} as
\begin{align*}
    \eps \int_{0}^{1} \Exp{\SBdynam{\rho}{\eps}} [\cU(\omega_s)-\cU(\omega_0)] ds &= \eps^2 \int_{0}^{1} \int_{0}^{s}  \Exp{\rho_{u}^{\eps}}\left[\left\langle \frac{1}{\eps}v_{u}^{\eps},\nabla \cU\right\rangle\right]  duds.  
\end{align*}
Apply Cauchy-Schwarz on $L^{2}(C^{d}[0,1] \times [0,1], d\SBdynam{\rho}{\eps}\otimes du)$ to obtain for each $s \in [0,1]$
\begin{align*}
    &\abs{-\int_{0}^{s}\Exp{\SBdynam{\rho}{\eps}}\left[\left\langle \frac{1}{\eps} v_{u}^{\eps}(\omega_u),\nabla \cU(\omega_u) \right\rangle\right]  du} \leq \int_{0}^{1} \Exp{\SBdynam{\rho}{\eps}}\left[\frac{1}{\eps}\|v_{u}^{\eps}(\omega_u)\|\|\nabla \cU(\omega_u)\|\right] du \\
    &\leq \left(\int_{0}^{1} \frac{1}{\eps^2}\|v_{u}^{\eps}\|_{L^{2}(\rho_{u}^{\eps})}^{2} du \right)^{1/2}\left(\int_{0}^{1} \Exp{\rho_{u}^{\eps}}[\|\nabla \cU\|^2] du \right)^{1/2}.
\end{align*}
Thus, altogether we have that
\begin{align}\label{eq:almost-bound-rxyeps}
    \Exp{\SBstatic{\rho}{\eps}{}}[R(X,Y,\eps)] &\leq \eps^2\left(\int_{0}^{1} \frac{1}{\eps^2}\|v_{t}^{\eps}\|_{L^{2}(\rho_{t}^{\eps})}^{2} dt\right)^{1/2}\left(\int_{0}^{1} \Exp{\rho_{t}^{\eps}}[\|\nabla \cU\|^2] dt \right)^{1/2}.
\end{align}

Recall the entropic Benamou Brenier expression for entropic cost given in \eqref{eq:symm-ent-cost-BB}. By the optimality of \eqref{eq:cont-eq-entropic-inter} and suboptimality of the constant curve $\rho_t = \rho$ for all $t \in [0,1]$,
\begin{align*}
    H(\SBstatic{\rho}{\eps}|\BMstatic{\rho}{\eps}) &= \frac{1}{2\eps} \int_{0}^{1} \|v_{t}^{\eps}\|_{L^{2}(\rho_t^{\eps})}^{2} dt + \frac{\eps}{8}\int_{0}^{1} I(\rho_{t}^{\eps})dt \leq \frac{\eps}{8}I(\rho).
\end{align*}
Rearranging terms gives
\begin{align}\label{eq:integrated-velocity-bound}
    \frac{1}{\eps^2} \int_{0}^{1} \|v_{t}^{\eps}\|_{L^{2}(\rho_t^\eps)}^2 dt \leq \frac{1}{4}\left(I(\rho)-\int_{0}^{1} I(\rho_t^{\eps}) dt \right).
\end{align}

With this inequality, \eqref{eq:almost-bound-rxyeps} becomes the desired
\begin{align*}
    \Exp{\SBstatic{\rho}{\eps}{}}[R(X,Y,\eps)] &\leq \frac{1}{2}\eps^2\left(I(\rho) - \int_{0}^{1} I(\rho_{t}^{\eps})dt\right)^{1/2}\left(\int_{0}^{1} \Exp{\rho_{t}^{\eps}}[\|\nabla \cU\|^2] dt \right)^{1/2}.
\end{align*}
Under Assumption \ref{assumption:LD_SB_relative_entropy} (2), Lemma \ref{lem:integ-entropic-inter} applies and gives that the rightmost constant has an upper bound once we consider $\eps \in (0,\eps_{0})$ for some $\eps_0 > 0$. Thus, the nonnegativity of the LHS of \eqref{eq:rel-ent-ld-sb} and Lemma \ref{lem:integrated-fisher-continuity} gives the stated convergence as $\eps \downarrow 0$. 
\end{proof}

\textbf{Integrated Fisher Information.} Under additional assumptions the integrated Fisher information along the entropic interpolation in \eqref{eq:rel-ent-ld-sb} can be replaced with a potentially more manageable quantity. In certain settings (for instance, when $\rho \in C_{c}^{\infty}(\mathbb{R}^{d})$ with finite entropy and Fisher information \cite[Lemma 3.2]{glrt-hwi-20}), there is a conserved quantity along the entropic interpolation called the energy, which we denote $\mathcal{E}_{\eps}(\rho)$. With $(v_{t}^{\eps},\rho_{t}^{\eps})_{t \in [0,1]}$ as defined in \eqref{eq:cont-eq-entropic-inter}, for any $t \in [0,1]$ 
\begin{align}\label{eq:energy-along-entropic-inter}
    \mathcal{E}_{\eps}(\rho) := \frac{1}{2\eps^2}\|v_{t}^{\eps}\|_{L^{2}(\rho_{t}^{\eps})}^{2} - \frac{1}{8}I(\rho_{t}^{\eps}).
\end{align}
Now, evaluate \eqref{eq:energy-along-entropic-inter} at $t = 1/2$. Then $v_{1/2}^{\eps} = 0$ and $\mathcal{E}_{\eps}(\rho) = - \frac{1}{8}I(\rho_{1/2}^{\eps})$. On the other hand, integrate \eqref{eq:energy-along-entropic-inter} over $t \in [0,1]$, this also gives $\mathcal{E}_{\eps}(\rho)$ and thus
\begin{align*}
    -\frac{1}{8}I(\rho_{1/2}^{\eps}) &= \frac{1}{2\eps^2}\int_{0}^{1}\|v_t^\eps\|_{L^{2}(\rho_{t}^{\eps})}^{2} dt - \frac{1}{8}\int_{0}^{1} I(\rho_{t}^{\eps}) dt.
\end{align*}
This in turn implies
\begin{align*}
    \frac{1}{\eps^2}\int_{0}^{1}\|v_t^\eps\|_{L^{2}(\rho_{t}^{\eps})}^{2} dt &= \frac{1}{4}\left(\int_{0}^{1} I(\rho_{t}^{\eps}) dt-I(\rho_{1/2}^{\eps})\right) \leq \frac{1}{4}\left(I(\rho) - I(\rho_{1/2}^{\eps})\right).
\end{align*}
Thus, assuming the conservation of energy holds under Assumption \ref{assumption:LD_SB_relative_entropy}, replacing the bound in \eqref{eq:integrated-velocity-bound} with the RHS of the above inequality rewrites Theorem \ref{thm:ld-sb-general-g} as
\begin{align}\label{eq:rel-ent-sb-ld-midpoint}
    H(\LDstatic{\rho}{\eps} | \SBstatic{\rho}{\eps})+H(\SBstatic{\rho}{\eps} | \LDstatic{\rho}{\eps}) \leq \frac{1}{2}\eps^2 \left(I(\rho) - I(\rho_{1/2}^{\eps})\right)^{1/2}\left(\int_{0}^{1} \Exp{\rho_t^\eps}\|\nabla \cU\|^2 dt\right)^{1/2}.
\end{align}
Alternatively, \eqref{eq:rel-ent-sb-ld-midpoint} also holds when $\rho$ is a univariate Gaussian, as explicit computations of the entropic interpolation in this case from \cite{gentil2017analogy} demonstrate that $I(\rho_{t}^{\eps})$ is minimized at $t = 1/2$.

\textbf{$\Schro$ Cost Expansion.}
In the course of proving Theorem \ref{thm:ld-sb-general-g}, we recover a result in alignment with the expansion of $\Schro$ cost about $\eps = 0$ developed in \cite[Theorem 1.6]{conforti21deriv}. Under assumptions on bounded $\rho$ with compact support and finite entropy, \cite[Theorem 1.6]{conforti21deriv} states
\begin{align}\label{eq:ct-thm1-6}
    H(\SBstatic{\rho}{\eps}|\BMstatic{\rho}{\eps}) &= \frac{\eps}{8}I(\rho) + o(\eps). 
\end{align}
Theorem \ref{thm:ld-sb-general-g} allows the next term in the expansion to be ascertained.
That is, in the same-marginal case (and under different regularity assumptions) the second order term in the $\Schro$ expansion is zero. 
\begin{corollary}\label{cor:CTextension}
    Let $\rho = e^{-g}$ satisfy Assumption \ref{assumption:LD_SB_relative_entropy}. Then
    \begin{align}\label{eq:CTextension}
    H(\SBstatic{\rho}{\eps}|\BMstatic{\rho}{\eps}) &= \frac{\eps}{8}I(\rho) - o(\eps^2). 
\end{align}
\end{corollary}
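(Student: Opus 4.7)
The plan is to bootstrap Theorem~\ref{thm:ld-sb-general-g} into a one-sided expansion of $H(\SBstatic{\rho}{\eps}|\BMstatic{\rho}{\eps})$ via the Pythagorean identity for $I$-projections. Since $\SBstatic{\rho}{\eps}$ is the $I$-projection of $\BMstatic{\rho}{\eps}$ onto the linear family $\Pi(\rho,\rho)$ and $\LDstatic{\rho}{\eps} \in \Pi(\rho,\rho)$, one has
\[ H(\LDstatic{\rho}{\eps}|\BMstatic{\rho}{\eps}) = H(\LDstatic{\rho}{\eps}|\SBstatic{\rho}{\eps}) + H(\SBstatic{\rho}{\eps}|\BMstatic{\rho}{\eps}), \]
which I would verify directly by combining the likelihood-ratio computations of Step 1 of Theorem~\ref{thm:ld-sb-general-g}'s proof with $H(\SBstatic{\rho}{\eps}|\BMstatic{\rho}{\eps}) = \tfrac{2}{\eps}\Exp{\rho}[f_\eps] + \Ent(\rho)$, read off from \eqref{eq:SB-density}. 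Using the decomposition $c(x,y,\eps) = \eps\cU(x) + R(x,y,\eps)$ and the integration-by-parts identity $\Exp{\rho}[\cU] = -\tfrac{1}{8}I(\rho)$ from Step 2 of that proof, I rewrite $H(\LDstatic{\rho}{\eps}|\BMstatic{\rho}{\eps}) = -\Exp{\LDstatic{\rho}{\eps}}[c(X,Y,\eps)] = \tfrac{\eps}{8}I(\rho) - \Exp{\LDstatic{\rho}{\eps}}[R(X,Y,\eps)]$. The Pythagorean identity then rearranges to
\[ H(\SBstatic{\rho}{\eps}|\BMstatic{\rho}{\eps}) = \tfrac{\eps}{8}I(\rho) - \Exp{\LDstatic{\rho}{\eps}}[R(X,Y,\eps)] - H(\LDstatic{\rho}{\eps}|\SBstatic{\rho}{\eps}). \]

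It then suffices to show that both correction terms are non-negative and $o(\eps^2)$. The term $H(\LDstatic{\rho}{\eps}|\SBstatic{\rho}{\eps})$ is trivially non-negative and is $o(\eps^2)$ by Theorem~\ref{thm:ld-sb-general-g}. For $\Exp{\LDstatic{\rho}{\eps}}[R]$ I would exploit that $\SBstatic{\rho}{\eps}$ and $\LDstatic{\rho}{\eps}$ share the first marginal $\rho$, so the $\eps\cU(X)$ contributions in \eqref{eq:sym-ent-identity} cancel, yielding
\[ H(\LDstatic{\rho}{\eps}|\SBstatic{\rho}{\eps}) + H(\SBstatic{\rho}{\eps}|\LDstatic{\rho}{\eps}) = \Exp{\SBstatic{\rho}{\eps}}[R(X,Y,\eps)] - \Exp{\LDstatic{\rho}{\eps}}[R(X,Y,\eps)]. \]
The LHS is non-negative and $o(\eps^2)$, so $\Exp{\LDstatic{\rho}{\eps}}[R] \leq \Exp{\SBstatic{\rho}{\eps}}[R]$, and the latter is itself $o(\eps^2)$ by \eqref{eq:almost-bound-rxyeps} together with Lemma~\ref{lem:integrated-fisher-continuity}. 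Non-negativity of $\Exp{\LDstatic{\rho}{\eps}}[R]$ in turn follows from the chain rule for relative entropy applied to $Q$ and $W_\rho$ under the endpoint projection, $H(Q|W_\rho) = H(\LDstatic{\rho}{\eps}|\BMstatic{\rho}{\eps}) + \Exp{\LDstatic{\rho}{\eps}}[H(Q_{X,Y}|W_{X,Y})]$; combined with $H(Q|W_\rho) = \tfrac{\eps}{8}I(\rho)$ from Step 2 of Theorem~\ref{thm:ld-sb-general-g}'s proof and the expression for $H(\LDstatic{\rho}{\eps}|\BMstatic{\rho}{\eps})$ above, this yields $\Exp{\LDstatic{\rho}{\eps}}[R] = \Exp{\LDstatic{\rho}{\eps}}[H(Q_{X,Y}|W_{X,Y})] \geq 0$.

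The main obstacle is controlling $\Exp{\LDstatic{\rho}{\eps}}[R]$: the Langevin bridge measure lacks the clean continuity-equation structure of the dynamic $\Schro$ bridge, so a direct Cauchy--Schwarz argument in the spirit of Step 3 of Theorem~\ref{thm:ld-sb-general-g}'s proof is not immediately available. The strategy above sidesteps this by exploiting the shared first marginal $\rho$ to transfer the estimate onto $\Exp{\SBstatic{\rho}{\eps}}[R]$, where the required $o(\eps^2)$ rate is already in hand, at which point the two non-negative $o(\eps^2)$ remainders combine to give exactly $H(\SBstatic{\rho}{\eps}|\BMstatic{\rho}{\eps}) = \tfrac{\eps}{8}I(\rho) - o(\eps^2)$.
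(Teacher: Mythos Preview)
Your proposal is correct and follows essentially the same route as the paper: combine the Pythagorean relation for the $I$-projection onto $\Pi(\rho,\rho)$ with the Step~2 bound $H(\LDstatic{\rho}{\eps}|\BMstatic{\rho}{\eps}) \leq \tfrac{\eps}{8}I(\rho)$ and the $o(\eps^2)$ estimate on $H(\LDstatic{\rho}{\eps}|\SBstatic{\rho}{\eps})$ from Theorem~\ref{thm:ld-sb-general-g}. The paper's proof is three lines and invokes only the Pythagorean \emph{inequality}; your version is more explicit, upgrading to the Pythagorean \emph{identity} for the linear family $\Pi(\rho,\rho)$, writing the deficit as the sum $\Exp{\LDstatic{\rho}{\eps}}[R] + H(\LDstatic{\rho}{\eps}|\SBstatic{\rho}{\eps})$, and then separately verifying non-negativity and the $o(\eps^2)$ rate for each piece---the chain-rule identification $\Exp{\LDstatic{\rho}{\eps}}[R] = \Exp{\LDstatic{\rho}{\eps}}[H(Q_{X,Y}|W_{X,Y})]$ and the transfer $\Exp{\LDstatic{\rho}{\eps}}[R] \leq \Exp{\SBstatic{\rho}{\eps}}[R]$ are nice touches that make the two-sided expansion fully transparent.
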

\begin{proof}
    From the Pythagorean Theorem of relative entropy \cite[Theorem 2.2]{csiszar75idiv},
    \[H(\LDstatic{\rho}{\eps}|\BMstatic{\rho}{\eps}) \geq H(\LDstatic{\rho}{\eps}|\SBstatic{\rho}{\eps}) + H(\SBstatic{\rho}{\eps}|\BMstatic{\rho}{\eps}).\]
Recall from Step 2 of Theorem \ref{thm:ld-sb-general-g} that $H(\LDstatic{\rho}{\eps}|\BMstatic{\rho}{\eps}) \leq \frac{\eps}{8}I(\rho)$. Thus, the expansion follows by the bound Theorem \ref{thm:ld-sb-general-g} provides for $H(\LDstatic{\rho}{\eps}|\SBstatic{\rho}{\eps})$. 
\end{proof}

\section{Small Temperature Approximation to Entropic Brenier Map}\label{sec:one-step}
This section develops applications of the same-marginal $\Schro$ bridge approximation developed in Theorem \ref{thm:ld-sb-general-g}. We present and discuss the Theorems in the present section. The proofs of the Theorems are closely related and given in Section \ref{sec:technical-lemmas}.

The major result of this section is Theorem \ref{thm:score-function-approx}, which establishes the following expansion of the same-marginal barycentric projection (entropic Brenier map) about the Brenier map, $\Id$, in $\mathbf{L}^2(\rho)$:
\begin{align*}
    \BPbase{\rho}{\eps} &= \Id + \frac{\eps}{2}\nabla \log \rho + o(\eps).
\end{align*}
The precise statement is given below. 
\begin{theorem}\label{thm:score-function-approx}
Let $\rho = e^{-g}$ satisfy Assumption \ref{assumption:LD_SB_relative_entropy}. Additionally, assume that $g$ is semiconvex and $\Exp{\rho}\|\nabla^{2} g\|_{HS}^{2} < +\infty$. Then for all $\eps > 0$ small enough, there is a constant $K > 0$ such that
    \begin{align*}
        \norm{\BPbase{\rho}{\eps}-\left(\Id + \frac{\eps}{2}\nabla \log \rho\right)}_{L^{2}(\rho)} \leq K \eps\left[ \left(I(\rho)-\int_{0}^{1}I(\rho^{\eps}_{s})ds\right)^{1/4} + \sqrt{\eps}\right].
    \end{align*}
    Thus, in $\mathbf{L}^2(\rho)$ the following convergences hold, with $f_{\eps}$ as defined in \eqref{eq:SB-density}:
    \begin{align*}
        \lim\limits_{\eps \downarrow 0} \frac{1}{\eps}\left(\BPbase{\rho}{\eps}-\Id\right) = \frac{1}{2}\nabla \log \rho &\textup{ and }
        \lim\limits_{\eps \downarrow 0} \frac{1}{\eps}\nabla f_{\eps} = -\frac{1}{2}\nabla \log \rho.
    \end{align*}
    As such, it holds that $\lim\limits_{\eps \downarrow 0} \frac{1}{\eps^{2}}\|\BPbase{\rho}{\eps}-\Id\|_{\mathbf{L}^2(\rho)}^{2} = \frac{1}{4}I(\rho)$. 
\end{theorem}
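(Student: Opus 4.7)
The plan is to interpose the Langevin analog of the barycentric projection, namely
\[
T_L(x) := \Exp{\LDstatic{\rho}{\eps}}[Y \mid X = x] = G_{\eps}(\Id)(x),
\]
where $G_\eps$ acts componentwise on the coordinate functions of $\Id$. The triangle inequality gives
\[
\left\| \BPbase{\rho}{\eps} - \bigl(\Id + \tfrac{\eps}{2}\nabla \log \rho\bigr) \right\|_{\mathbf{L}^{2}(\rho)} \leq \left\| \BPbase{\rho}{\eps} - T_L \right\|_{\mathbf{L}^{2}(\rho)} + \left\| T_L - \bigl(\Id + \tfrac{\eps}{2}\nabla \log \rho\bigr) \right\|_{\mathbf{L}^{2}(\rho)},
\]
splitting the proof into a Schr\"odinger-to-Langevin comparison (powered by Theorem~\ref{thm:ld-sb-general-g}) and a purely semigroup-theoretic Langevin expansion.

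First I would handle the Langevin expansion. Since $L\,\Id = \tfrac{1}{2}\nabla \log \rho$ (from \eqref{eq:LD-generator} with $\rho = e^{-g}$), Dynkin's formula applied coordinatewise yields
\[
T_L(x) - x = \int_{0}^{\eps} G_{s}\bigl(\tfrac{1}{2}\nabla \log \rho\bigr)(x)\, ds,
\]
and iterating once more,
\[
T_L(x) - x - \tfrac{\eps}{2}\nabla \log \rho(x) = \int_{0}^{\eps} \int_{0}^{s} G_{u}\bigl(L\,\tfrac{1}{2}\nabla \log \rho\bigr)(x)\, du\, ds.
\]
Semiconvexity of $g$ combined with $\Exp{\rho}[\|\nabla^{2} g\|_{HS}^{2}] < +\infty$ will let me control $L\,\nabla \log \rho$ in an $\mathbf{L}^{2}(\rho)$-type sense; invariance of $\rho$ under $G_{u}$ then yields $\|T_L - (\Id + \tfrac{\eps}{2}\nabla \log \rho)\|_{\mathbf{L}^{2}(\rho)} = O(\eps^{3/2})$, which sits inside the $K\eps\sqrt{\eps}$ piece of the stated bound.

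Next, for the Schr\"odinger-to-Langevin comparison, the crucial observation is that $\SBstatic{\rho}{\eps}$ and $\LDstatic{\rho}{\eps}$ share the first marginal $\rho$, so conditional means can be compared directly. Since each coordinate map $y \mapsto y_{i}$ is $1$-Lipschitz, for each $x$,
\[
\left| \BPbase{\rho}{\eps}(x) - T_L(x) \right| \leq W_{2}\bigl(\SBstatic{\rho}{\eps}(\cdot\mid x),\, \LDstatic{\rho}{\eps}(\cdot\mid x)\bigr).
\]
Under semiconvexity, a short-time Bakry-\'Emery contraction argument shows that the Langevin transition kernel $\LDstatic{\rho}{\eps}(\cdot \mid x)$ satisfies a Talagrand $T_{2}$-inequality with constant $C\eps$, uniformly in $x$ and for $\eps$ small. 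Integrating against $\rho$, using the disintegration of relative entropy along the shared marginal, and plugging in Theorem~\ref{thm:ld-sb-general-g} gives
\[
\left\|\BPbase{\rho}{\eps} - T_L\right\|_{\mathbf{L}^{2}(\rho)}^{2} \leq 2C\eps \cdot H(\SBstatic{\rho}{\eps} \mid \LDstatic{\rho}{\eps}) = O\Bigl(\eps^{3} \bigl(I(\rho) - \textstyle\int_{0}^{1} I(\rho_{s}^{\eps})\, ds\bigr)^{1/2}\Bigr),
\]
which comfortably lives inside the stated bound, since $\eps^{3/2}(I(\rho)-\int I(\rho_s^\eps)ds)^{1/4} \le \eps(I(\rho)-\int I(\rho_s^\eps)ds)^{1/4}$ for small $\eps$.

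The hard part will be the short-time uniform $T_{2}$-inequality for the Langevin transition kernel with constant $O(\eps)$: semiconvexity of $g$ is essential here, and the lower Hessian bound determines the constant $K$ in the statement. Once the two pieces are assembled, the $\mathbf{L}^{2}(\rho)$ convergence of $\eps^{-1}(\BPbase{\rho}{\eps} - \Id)$ to $\tfrac{1}{2}\nabla \log \rho$ follows by sending $\eps \downarrow 0$ and invoking Lemma~\ref{lem:integrated-fisher-continuity} to annihilate $I(\rho) - \int_{0}^{1} I(\rho_{s}^{\eps})\, ds$. The parallel statement for $\nabla f_\eps$ then follows from the identity $\BPbase{\rho}{\eps} = \Id - \nabla f_{\eps}$ in \eqref{eq:bp-potential-identity}, and the Fisher information identity follows by squaring the first-order expansion and noting that the cross term vanishes in the limit via Cauchy-Schwarz.
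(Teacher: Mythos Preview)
Your proposal follows essentially the same architecture as the paper: interpose the Langevin barycentric projection $\LBPbase{\rho}{\eps}$, control $\|\BPbase{\rho}{\eps}-\LBPbase{\rho}{\eps}\|_{\mathbf{L}^2(\rho)}$ via a conditional Talagrand inequality combined with Theorem~\ref{thm:ld-sb-general-g}, and control $\|\LBPbase{\rho}{\eps}-(\Id+\tfrac{\eps}{2}\nabla\log\rho)\|_{\mathbf{L}^2(\rho)}$ by semigroup theory. The paper packages these two steps as Lemma~\ref{lem:BP_upperbounds} (with $\sigma_\eps=\rho$) and Lemma~\ref{lem:LD_BP_covergence}. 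For the Talagrand step the paper cites \cite[Theorem 1.8(3)]{cattiaux-guillin-slc-14} with a constant $\alpha$ that is $O(1)$ in $\eps$, whereas you invoke the short-time Bakry--\'Emery bound to get a constant of order $\eps$; either works and yours is a shade sharper.

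The one place your plan deviates and needs adjustment is the Langevin expansion. You iterate Dynkin twice, which requires $\tfrac{1}{2}\nabla\log\rho\in\mathcal{D}(L)$ and an $\mathbf{L}^2(\rho)$ bound on $L\nabla\log\rho$. But $L\nabla\log\rho$ involves \emph{third} derivatives of $g$, and neither semiconvexity nor $\Exp{\rho}\|\nabla^2 g\|_{HS}^2<\infty$ says anything about those; your stated justification does not close the step. (The claim is in fact salvageable by a different route: one checks $L(\tfrac{1}{2}\nabla\log\rho)=\nabla\cU$, and then Assumption~\ref{assumption:LD_SB_relative_entropy}(1)--(2) give $\nabla\cU\in\mathbf{L}^2(\rho)$.) The paper sidesteps this entirely by stopping after one Dynkin iteration and applying the Dirichlet-form bound $\|G_s f-f\|_{\mathbf{L}^2(\rho)}^2\le 2s\|\nabla f\|_{\mathbf{L}^2(\rho)}^2$ with $f=\tfrac{1}{2}\nabla\log\rho$, which uses \emph{exactly} the hypothesis $\Exp{\rho}\|\nabla^2 g\|_{HS}^2<\infty$ and nothing more; this is both cleaner and explains why that hypothesis appears in the statement.
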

This is in accordance with \cite[Theorem 1]{sander_22}, but now with fully supported densities. 

Theorem \ref{thm:score-function-approx} follows from a more general result that we now present in Theorem \ref{thm:sb-generator-approx}. We argue that for sufficiently regular $\xi: \mathbb{R}^{d} \to \mathbb{R}$,
\begin{align*}
    \Exp{\SBstatic{\rho}{\eps}}[\xi(Y)|X=x] = \xi(x) + \eps L \xi (x) + o(\eps),
\end{align*}
where $L$ is the generator of the Langevin diffusion defined in \eqref{eq:LD-generator}. In other words, the conditional density of one coordinate of the $\Schro$ bridge given the other is approximated by the transition density of the Langevin diffusion for small $\eps$. A possible application of this result is in the simulation of the conditional density of the small temperature $\Schro$ bridge.

\begin{theorem}\label{thm:sb-generator-approx}
    Let $\rho = e^{-g}$ satisfying the assumptions of Theorem \ref{thm:score-function-approx}, and let $L$ be as defined in \eqref{eq:LD-generator}. For all Lipschitz $\xi \in \mathcal{D}(L)$, the following convergence holds in $\mathbf{L}^{2}(\rho)$
    \begin{align*}
        \lim\limits_{\eps \downarrow 0}\frac{1}{\eps}\left(\Exp{\SBstatic{\rho}{\eps}}[\xi(Y)|X=x]-\xi(x)\right) &= L \xi(x).
    \end{align*}
\end{theorem}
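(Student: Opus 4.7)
The plan is to add and subtract the Langevin conditional expectation so that the difference quotient splits into a ``Langevin generator'' piece and a ``Schrödinger--Langevin discrepancy'' piece. Concretely, since $\LDstatic{\rho}{\eps}$ is the joint law of the stationary Langevin diffusion at times $0$ and $\eps$, one has $\Exp{\LDstatic{\rho}{\eps}}[\xi(Y)|X=x] = G_{\eps}\xi(x)$, and so
\begin{align*}
\frac{\Exp{\SBstatic{\rho}{\eps}}[\xi(Y)|X=x] - \xi(x)}{\eps}
= \underbrace{\frac{\Exp{\SBstatic{\rho}{\eps}}[\xi(Y)|X=x] - G_{\eps}\xi(x)}{\eps}}_{A_{\eps}(x)}
+ \underbrace{\frac{G_{\eps}\xi(x) - \xi(x)}{\eps}}_{B_{\eps}(x)}.
\end{align*}
The term $B_{\eps}$ converges to $L\xi$ in $\mathbf{L}^{2}(\rho)$ directly from the definition of the generator $L$ on $\mathcal{D}(L)$, using the strong continuity of the Langevin semigroup on $\mathbf{L}^{2}(\rho)$. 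The whole work is therefore to show $\|A_{\eps}\|_{\mathbf{L}^{2}(\rho)} \to 0$.

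To control $A_{\eps}$, I would use a Pinsker-type bound on conditional expectations. Since both $\SBstatic{\rho}{\eps}$ and $\LDstatic{\rho}{\eps}$ have the same first marginal $\rho$, the chain rule of relative entropy gives
\begin{align*}
\int_{\mathbb{R}^{d}} \KL{\SBstatic{\rho}{\eps}(\cdot\mid x)}{\LDstatic{\rho}{\eps}(\cdot\mid x)}\,\rho(x)\,dx = \KL{\SBstatic{\rho}{\eps}}{\LDstatic{\rho}{\eps}}.
\end{align*}
For the bounded case $\|\xi\|_{\infty}<\infty$, the elementary bound $|\Exp{\mu_{1}}[\xi]-\Exp{\mu_{2}}[\xi]| \leq 2\|\xi\|_{\infty}\operatorname{TV}(\mu_{1},\mu_{2})$ combined with Pinsker yields
\begin{align*}
\bigl\|\Exp{\SBstatic{\rho}{\eps}}[\xi(Y)|X=\cdot] - G_{\eps}\xi\bigr\|_{\mathbf{L}^{2}(\rho)}^{2}
\leq 2\|\xi\|_{\infty}^{2}\,\KL{\SBstatic{\rho}{\eps}}{\LDstatic{\rho}{\eps}},
\end{align*}
and Theorem \ref{thm:ld-sb-general-g} gives the right-hand side is $o(\eps^{2})$. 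Hence $\|A_{\eps}\|_{\mathbf{L}^{2}(\rho)} = o(1)$, and combining with the limit for $B_{\eps}$ finishes the argument.

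The main obstacle is accommodating unbounded Lipschitz $\xi$, since the plain Pinsker bound uses $\|\xi\|_{\infty}$. I would handle this either by truncating $\xi$ at level $M$ and separately controlling the tail via the uniform subexponential moment bound on the entropic interpolations and Langevin conditionals supplied by Lemma \ref{lem:integ-entropic-inter} (letting $M\to\infty$ after $\eps\downarrow 0$), or by replacing Pinsker with a Kantorovich--Rubinstein estimate $|\Exp{\mu_{1,x}}[\xi]-\Exp{\mu_{2,x}}[\xi]|\leq L_{\xi}\Was{1}(\mu_{1,x},\mu_{2,x})$ and invoking a transport--entropy inequality for the conditionals, whose subexponential concentration follows from the same lemma. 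In either route the $o(\eps^{2})$ rate from Theorem \ref{thm:ld-sb-general-g} has enough slack to absorb polynomial moment factors, so the limit passes through and yields the stated $\mathbf{L}^{2}(\rho)$ convergence.
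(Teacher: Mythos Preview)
Your decomposition into $A_\eps + B_\eps$ is exactly what the paper does, and your handling of $B_\eps$ matches. Your option (b) for $A_\eps$---Kantorovich--Rubinstein plus a transport--entropy inequality for the conditionals---is precisely the paper's route: using the Lipschitz constant $C$ of $\xi$ and the coupling built in Lemma~\ref{lem:BP_upperbounds}, the paper bounds
\[
\bigl|\Exp{\SBstatic{\rho}{\eps}}[\xi(Y)\mid X=x]-\Exp{\LDstatic{\rho}{\eps}}[\xi(Y)\mid X=x]\bigr|\le C\,\Was{2}\!\bigl(p(x,\cdot),q_\eps(x,\cdot)\bigr)\le C\sqrt{\alpha\,H\!\bigl(p(x,\cdot)\mid q_\eps(x,\cdot)\bigr)},
\]
then integrates in $x$ and invokes Theorem~\ref{thm:ld-sb-general-g}.

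The one point to correct is your justification of the transport--entropy inequality. It does \emph{not} come from Lemma~\ref{lem:integ-entropic-inter}: that lemma gives subexponential bounds for the entropic-interpolation \emph{marginals} $\rho_s^\eps$, not for the Langevin \emph{conditional} kernels $q_\eps(x,\cdot)$, and uniform-in-$x$ concentration of the latter is what you need. The paper instead uses the semiconvexity of $g$ assumed in Theorem~\ref{thm:score-function-approx}: by \cite[Theorem 1.8(3)]{cattiaux-guillin-slc-14} (invoked already in Lemma~\ref{lem:BP_upperbounds}), each $q_\eps(x,\cdot)$ satisfies a $T_2$ inequality with a constant $\alpha$ depending only on $\lambda$ and an upper bound on $\eps$. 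This is why the semiconvexity hypothesis is there; your argument as written does not use it and would not go through without it.

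Your option (a) via truncation is more delicate than you suggest. Splitting $\xi=\xi_M+(\xi-\xi_M)$ and using that both $\SBstatic{\rho}{\eps}$ and $\LDstatic{\rho}{\eps}$ have second marginal $\rho$, the tail piece in $\mathbf{L}^2(\rho)$ is bounded by $2\,\bigl(\Exp{\rho}|\xi-\xi_M|^2\bigr)^{1/2}$, independent of $\eps$; after dividing by $\eps$ you must send $M=M(\eps)\to\infty$. The bounded part then contributes $O\!\bigl(M(\eps)\cdot\eps^{-1}\sqrt{H(\SBstatic{\rho}{\eps}\mid\LDstatic{\rho}{\eps})}\bigr)$, and Theorem~\ref{thm:ld-sb-general-g} only gives $\eps^{-1}\sqrt{H}=o(1)$ with no explicit rate, so absorbing even a logarithmic $M(\eps)$ is not automatic. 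The Talagrand route avoids this entirely.
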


We emphasize that this family of integral operators $(\Exp{\SBstatic{\rho}{\eps}}[\cdot|X],\eps > 0)$ does not form a Markov semigroup. Nevertheless, we show that it does admit a derivative at zero which is the generator of the Langevin diffusion. Hence, for small $\eps > 0$, this family of integral operators behaves approximately like the Langevin semigroup, a classical object studied in both probability and PDEs. As a consequence, it uncovers an approximate Markov structure lurking in the small temperature same-marginal $\Schro$ bridge. Namely, if we compute the same marginal $\Schro$ bridge with temperature $\eps/2$ and concatenate it with itself, we obtain a close approximation to the $\Schro$ bridge with temperature $\eps$.

For the last Theorem of this section, we produce a generalization of Theorem \ref{thm:score-function-approx}. Specifically, Theorem \ref{thm:one_step_convergence} demonstrates that the barycentric projection can approximate arbitrary elements of the tangent space at $\rho \in \cP_{2}^{ac}(\mathbb{R}^{d})$. We now introduce in full generality the ideas discussed in Section \ref{sec:introduction}.

Recall that the tangent space at $\rho$, denoted by $\Tan{\rho}$, is defined in \cite[Definition 8.4.1]{ambrosio2005gradient} by
\begin{align*}
    \Tan{\rho} := \overline{\{\nabla \varphi: \varphi \in C_{c}^{\infty}(\mathbb{R}^{d})\}}^{L^{2}(\rho)}.
\end{align*}
Now, consider a vector field $v=\nabla u \in \Tan{\rho}$, such that the surrogate measure \eqref{eq:surrogate} is defined. Our objective in this section is to lay down conditions under which the following holds
\begin{equation}\label{eq:short_thm_2}
\lim_{\eps \downarrow 0}\frac{1}{\eps}\Was{2}(\SB{\rho}{\eps}{},(\Id + \eps v)_{\#}\rho)=0.
\end{equation}

In fact, we will prove the above theorem in a somewhat more general form when the tangent vector field may not even be of a gradient form. The general setting is as follows. Consider a collection of vectors fields of gradient type $(v_{\eps} = \nabla \psi_{\eps}, \eps > 0)$,
we wish to tightly approximate the pushforwards $(\Id+\eps v_{\eps})_{\#}\rho$ with $\SB{\rho}{\eps}{}$. As a shorthand, we write
\begin{align}\label{eq:S-eps}
    \opt{\eps}{1}{\rho} := (\Id + \eps v_{\eps})_{\#}\rho.
\end{align}
Suppose that for each $\eps > 0$ there exists $\theta_\eps \in \mathbb{R} \setminus \{0\}$ such that the surrogate measure defined in \eqref{eq:surrogate} exists. Set
\begin{align}\label{eq:simga-eps-onestep}
    \sigma_{\eps}(x) &= \exp\left(2\theta_{\eps} \psi_{\eps}(x)- \Lambda_0(\theta_{\eps})\right), \quad \text{where}\quad \Lambda_0(\theta_{\eps}):=\log \int e^{2\theta \psi_{\eps}(y)}dy.
\end{align}
Redefine the SB step as
\begin{align}\label{sb-step-general}
    \tag{SB} \SB{\rho}{\eps}{} &= \left(\left(1-\theta_{\eps}^{-1}\right)\Id + \theta_{\eps}^{-1} \BPbase{\sigma_\eps}{\eps}\right)_{\#}\rho,
\end{align}
where $\BPbase{\sigma_\eps}{\eps}$ is the barycentric projection defined in \eqref{eq:bary-proj}.

In Theorem \ref{thm:one_step_convergence} we show that, under appropriate assumptions, 
\begin{equation}\label{eq:short_thm_2-general}
\lim_{\eps \downarrow 0}\frac{1}{\eps}\Was{2}(\SB{\rho}{\eps}{},\opt{\eps}{1}{\rho})=0.
\end{equation} 

There are multiple reasons why one might be interested in this general setup. Firstly, we know that, for any $v \in \Tan{\rho}$, there exists a sequence of tangent elements of the type $v_\eps=\nabla \psi_\eps$ such that $\lim_{\eps\rightarrow 0+} v_\eps=v$ in $\mathbf{L}^2(\rho)$. In particular, by an obvious coupling, 
\begin{equation}\label{eq:ltwoapprox}
\limsup_{\eps \rightarrow 0+}\frac{1}{\eps} \Was{2}( (\Id + \eps v_{\eps})_{\#}\rho ,(\Id + \eps v)_{\#}\rho)\le \lim_{\eps \rightarrow 0+}\norm{v_\eps - v}_{\mathbf{L}^2(\rho)}=0.
\end{equation}
But, of course, if $v$ is not of the gradient form, one cannot construct a surrogate measure as in \eqref{eq:surrogate} to run the Schr\"odinger bridge scheme. A natural remedy is to show that \eqref{eq:short_thm_2-general} holds under appropriate assumptions. Then, combined with \eqref{eq:ltwoapprox} we recover \eqref{eq:short_thm_2} even when $v$ is not a gradient. 

There is one more reason why one might use the above generalized scheme. The more natural discretization scheme of Wasserstein gradient flows is the implicit Euler or the JKO scheme. In this case, however, the vector field $v_\eps$, which is the Kantorovich potential between two successive steps of the JKO scheme, changes with $\eps$. This necessitates our generalization. Hence, we continue with \eqref{eq:simga-eps-onestep} and the generalized \eqref{sb-step-general} step. 

The proof of \eqref{eq:short_thm_2-general} relies on the close approximation of the Langevin diffusion to the $\Schro$ bridge provided in Theorem \ref{thm:ld-sb-general-g}. To harness this approximation, we introduce an intermediary scheme based on the Langevin diffusion, which we denote $\LD{\cdot}{\eps}{}$ and define in the following manner. Let $\eps > 0$ and consider the symmetric Langevin diffusion $\left( X_t,\; 0\le t \le \eps\right)$ with stationary density $\sigma_{\eps}$ as defined in \eqref{eq:simga-eps-onestep}. The drift function for this diffusion is $x \mapsto \frac{1}{2}\nabla \log \sigma_{\eps}(x)= \theta_{\eps}\nabla \psi_\eps(x)=\theta_{\eps}v_{\eps}(x)$. In analogy with the definition of $\BPbase{\sigma_{\eps}}{\eps}(\cdot)$ in \eqref{eq:bary-proj}, define $\LBPbase{\sigma_{\eps}}{\eps}(\cdot)$ by
\begin{align}\label{eq:lpbase-defn}
    \LBPbase{\sigma_{\eps}}{\eps}(x) &= \Exp{\LDstatic{\sigma_{\eps}}{\eps}}[Y|X=x].
\end{align}
Next, in analogy with the definition of $\SB{\cdot}{\eps}{}$ in \eqref{sb-step-general}, define the Langevin diffusion update
\begin{align}\label{eq:LD-update}
    \tag{LD} \LD{\rho}{\eps}{} &= \left(\left(1-\theta_\eps^{-1}\right)\Id + \theta_\eps^{-1}\LBPbase{\sigma_{\eps}}{\eps}\right)_{\#}\rho. 
\end{align}
In analogy with Section \ref{sec:sb-ld}, for $\eps > 0$ set
    \begin{equation}\label{eq:defineueps}
        \cU_{\eps} := \frac{1}{8}\|\nabla \log \sigma_{\eps}\|^2 + \frac{1}{4}\Delta \log \sigma_{\eps}.
    \end{equation}
We now make the following assumptions on the collection of densities $(\sigma_\eps, \eps > 0)$ defined in \eqref{eq:simga-eps-onestep}.
\begin{assumption}\label{assumptions:sigma-eps-onestep}
For each $\eps > 0$, $\sigma_\eps$ as defined in \eqref{eq:simga-eps-onestep} satisfies Assumption \ref{assumption:LD_SB_relative_entropy}. Additionally
\begin{itemize}
    \item[(1)] There exists $\sigma_0 \in \cP_{2}^{ac}(\mathbb{R}^{d})$ such that $(\sigma_\eps, \eps > 0)$ converges weakly to $\sigma_0$ and $I(\sigma_\eps) \to I(\sigma_0)$ as $\eps \downarrow 0$. Moreover, Assumption \ref{assumption:LD_SB_relative_entropy} holds uniformly in $\eps$ in the following manner: there exists $\eps_0 > 0$ and $C,D > 0$, $N \geq 1$ such that for all $\eps \in (0,\eps_0)$, $\|\nabla \cU_{\eps}(x)\|^2 \leq C(1+\|x\|^{N})$ and for $X_{\eps} \sim \sigma_{\eps}$, $\|X_{\eps}\|_{\psi_{1}} \leq D$.
    \item[(2)] The $(\sigma_{\eps}, \eps \in (0,\eps_0))$ are uniformly semi log-concave, that is, there exists $\lambda \in \mathbb{R}$ such that for all $\eps \in (0,\eps_0)$, $- \nabla^{2}\log \sigma_{\eps} \geq \lambda \Id$. Additionally, $L := \limsup\limits_{\eps \downarrow 0} \Exp{\sigma_{\eps}}\|\nabla^2 \log \sigma_{\eps}\|_{HS}^{2} < +\infty$, where $\|.\|_{HS}$ is the Hilbert-Schmidt norm.
    \item[(3)] $\chi:=\limsup\limits_{\eps
     \downarrow 0} \|\rho/\sigma_{\eps
     }\|_{\infty} < +\infty$ and $\theta:=\liminf\limits_{\eps \downarrow 0} \abs{\theta_{\eps}} > 0$.
\end{itemize}
\end{assumption}
  
\begin{remark}\label{rmk:theta}
   In the examples we discuss, $\theta_\eps$ is either always $+1$ or always $-1$. Hence, this parameter simply needs to have a positive lower bound (in absolute value) and it does not scale with $\eps$. 
\end{remark}

While these assumptions may appear stringent, they are indeed satisfied by a wide class of examples. In many of these examples, $\sigma_{\eps}$ takes the form $\sigma_{\eps} = \rho e^{V_{\eps}}$. Thus, condition (3) in Assumption \ref{assumptions:sigma-eps-onestep} is satisfied if $\liminf\limits_{\eps \downarrow 0} \inf\limits_{x\in \mathbb{R}^{d}} V_{\eps} > -\infty$. For instance, in the proof of Theorem $\ref{thm:score-function-approx}$, $\sigma_{\eps} = \rho$ for all $\eps > 0$ and thus $V_{\eps} \equiv 0$. When approximating velocity fields coming from the gradient flow of entropy, $\Ent(\cdot)$, it is also the case that $\sigma_\eps = \rho$ for all $\eps > 0$. Similarly, for velocity fields coming from the gradient flow of $H(\cdot | N(0,1))$ from certain starting measures, $V_{\eps}(x) = \frac{1}{2}x^2 + C$, where $C$ is a normalizing constant, for all $\eps > 0$. For more detailed discussion, we refer readers to the examples section of \cite{AHMP} (the preprint in which these results originally appeared). 

We now provide a quantitative statement about the convergence stated in \eqref{eq:short_thm_2-general}, where we take care to note the dependence on constants appearing in Assumption \ref{assumptions:sigma-eps-onestep}. Recall $\opt{\eps}{1}{\cdot}$ as defined in \eqref{eq:S-eps} and $\SB{\cdot}{\eps}{}$ as defined in \eqref{sb-step-general}.

\begin{theorem}\label{thm:one_step_convergence}
Under Assumption \ref{assumptions:sigma-eps-onestep}, there exists a constant $K > 0$ depending on $C,D,N,\lambda,L, \chi,$ $\theta$, such that for all $\eps \in (0,\eps_0)$
\begin{align*}
    \Was{2}(\SB{\rho}{\eps}{},\opt{\eps}{1}{\rho})&\leq
    K \eps\left[ \left(I(\sigma_{\eps})-\int_{0}^{1}I((\sigma_{\eps})^{\eps}_{s})ds\right)^{1/4} + \sqrt{\eps}\right].
\end{align*}  

In particular, $\lim\limits_{\eps \downarrow 0} \eps^{-1}\Was{2}(\SB{\rho}{\eps}{},\opt{\eps}{1}{\rho})=0$. 
\end{theorem}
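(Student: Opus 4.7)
The plan is to reduce the statement directly to Theorem \ref{thm:score-function-approx} applied to the surrogate measure $\sigma_\eps$ itself in place of $\rho$. Define the two transport maps whose $\rho$-pushforwards give the two sides,
\[
T_\eps(x) := (1-\theta_\eps^{-1})x + \theta_\eps^{-1}\BPbase{\sigma_\eps}{\eps}(x), \qquad U_\eps(x) := x + \eps v_\eps(x),
\]
so that $\SB{\rho}{\eps}{} = (T_\eps)_{\#}\rho$ and $\opt{\eps}{1}{\rho} = (U_\eps)_{\#}\rho$. The algebraic observation that makes the whole argument work is that, by the very definition of the surrogate in \eqref{eq:simga-eps-onestep}, $\tfrac{1}{2}\nabla\log\sigma_\eps = \theta_\eps v_\eps$, and hence
\[
T_\eps(x) - U_\eps(x) = \theta_\eps^{-1}\left(\BPbase{\sigma_\eps}{\eps}(x) - x - \tfrac{\eps}{2}\nabla\log\sigma_\eps(x)\right).
\]
The expression in parentheses is precisely the vector-valued function whose $L^2(\sigma_\eps)$ norm is estimated by Theorem \ref{thm:score-function-approx}.

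From the standard pushforward inequality $\Was{2}(f_{\#}\rho, g_{\#}\rho) \leq \|f-g\|_{L^2(\rho)}$, a change of base measure using $\|\rho/\sigma_\eps\|_\infty \leq 2\chi$ (valid for sufficiently small $\eps$ by Assumption \ref{assumptions:sigma-eps-onestep}(3)), and $|\theta_\eps|^{-1} \leq 2/\theta$, one obtains
\[
\Was{2}(\SB{\rho}{\eps}{}, \opt{\eps}{1}{\rho}) \;\leq\; \frac{2\sqrt{2\chi}}{\theta}\,\left\|\BPbase{\sigma_\eps}{\eps} - \Id - \tfrac{\eps}{2}\nabla\log\sigma_\eps\right\|_{L^2(\sigma_\eps)}.
\]
A direct application of Theorem \ref{thm:score-function-approx} with $\rho$ replaced by $\sigma_\eps$ at the same temperature $\eps$ then bounds the right-hand side by $K_0\,\eps\,[(I(\sigma_\eps)-\int_0^1 I((\sigma_\eps)^\eps_s)\,ds)^{1/4}+\sqrt{\eps}]$, where $K_0$ is the constant furnished by that theorem; absorbing the prefactors yields the stated bound with a constant $K$ depending only on the quantities listed in the statement.

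The main obstacle is to verify that Theorem \ref{thm:score-function-approx} is applicable uniformly across the family $(\sigma_\eps)_{\eps\in(0,\eps_0)}$, that is, with a single constant $K_0$ independent of $\eps$. Uniform subexponentiality of $(\sigma_\eps)$ and the polynomial envelope on $\|\nabla \cU_\eps\|^2$ in Assumption \ref{assumptions:sigma-eps-onestep}(1) supply Assumption \ref{assumption:LD_SB_relative_entropy} uniformly in $\eps$, while uniform semi-log-concavity and $\limsup_{\eps\downarrow 0}\Exp{\sigma_\eps}\|\nabla^2\log\sigma_\eps\|_{HS}^2 \leq L$ from (2) furnish the extra semiconvexity and Hilbert--Schmidt integrability hypothesis required by Theorem \ref{thm:score-function-approx}. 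It remains to trace through the proof of that theorem and check that the constant it produces depends on the underlying measure only through $C,D,N,\lambda$ and $L$, so that it can be bounded by a single number along the entire family. Once this uniformity is in hand the displayed estimate is immediate, and the conclusion $\eps^{-1}\Was{2}(\SB{\rho}{\eps}{},\opt{\eps}{1}{\rho})\to 0$ follows from an analogue of Lemma \ref{lem:integrated-fisher-continuity} adapted to the varying family $(\sigma_\eps)$, using $\sigma_\eps\to \sigma_0$ weakly and $I(\sigma_\eps)\to I(\sigma_0)$ from (1) to conclude that $I(\sigma_\eps) - \int_0^1 I((\sigma_\eps)^\eps_s)\,ds$ tends to zero as $\eps\downarrow 0$.
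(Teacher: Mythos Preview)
Your algebraic reduction is correct and clean: indeed $T_\eps - U_\eps = \theta_\eps^{-1}\bigl(\BPbase{\sigma_\eps}{\eps} - \Id - \tfrac{\eps}{2}\nabla\log\sigma_\eps\bigr)$, and the pushforward bound together with the change of measure $\rho\to\sigma_\eps$ are fine. The difficulty is a \emph{circularity} in the logical structure. In this paper, Theorem~\ref{thm:score-function-approx} is not proved independently; its proof is literally ``apply Theorem~\ref{thm:one_step_convergence} with $\sigma_\eps=\rho$ for all $\eps$.'' So invoking Theorem~\ref{thm:score-function-approx} to establish Theorem~\ref{thm:one_step_convergence} is circular as written, and the instruction to ``trace through the proof of that theorem'' to extract uniform constants sends you straight back to the statement you are proving.

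The fix is to bypass Theorem~\ref{thm:score-function-approx} and appeal directly to Lemmas~\ref{lem:BP_upperbounds}, \ref{lem:bound-preserved-eps}, and \ref{lem:LD_BP_covergence}, which are the actual content. Once you do this, your approach and the paper's are essentially the same argument in slightly different packaging. The paper triangulates through the intermediate Langevin update $\LD{\rho}{\eps}{}$: Lemmas~\ref{lem:BP_upperbounds} and \ref{lem:bound-preserved-eps} control $\Was{2}(\SB{\rho}{\eps}{},\LD{\rho}{\eps}{})$ via the Talagrand inequality and the relative-entropy estimate of Theorem~\ref{thm:ld-sb-general-g}, Lemma~\ref{lem:LD_BP_covergence} controls $\Was{2}(\LD{\rho}{\eps}{},\opt{\eps}{1}{\rho})$ via the semigroup estimate \eqref{eq:semigrp-bdd}, and the triangle inequality closes. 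Your version first changes measure to $\sigma_\eps$ and then would split the $L^2(\sigma_\eps)$ norm of $\BPbase{\sigma_\eps}{\eps}-\Id-\tfrac{\eps}{2}\nabla\log\sigma_\eps$ through $\LBPbase{\sigma_\eps}{\eps}$; the paper splits first in $L^2(\rho)$ and changes measure inside each lemma. These are the same three ingredients in a different order, and the uniformity you flag is exactly what Assumption~\ref{assumptions:sigma-eps-onestep} is designed to supply to those lemmas. Your ``analogue of Lemma~\ref{lem:integrated-fisher-continuity} for varying $\sigma_\eps$'' is precisely Lemma~\ref{lem:bound-preserved-eps}.
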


Hence, from the discussion following \eqref{eq:ltwoapprox} we immediately obtain:

\begin{corollary}\label{lem:general-v-approx}
Let $v \in \Tan{\rho}$, and let $(\nabla \psi_{\eps}, \eps > 0) \subset C^{\infty}(\mathbb{R}^{d}) \cap \Tan{\rho}$ be such that $\lim\limits_{\eps \to 0+} \|\nabla \psi_{\eps}-v\|_{\mathbf{L}^2(\rho)} =0$ and the corresponding surrogate measures $(\sigma_{\eps} > 0)$ satisfy Assumption \ref{assumptions:sigma-eps-onestep}. Then
$    \lim\limits_{\eps \downarrow 0+} \eps^{-1}\Was{2}((\Id+\eps v)_{\#}\rho,\SB{\rho}{\eps}{}) = 0$.
\end{corollary}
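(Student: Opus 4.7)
The plan is to combine the quantitative Schrödinger-bridge step approximation provided by Theorem \ref{thm:one_step_convergence} with the a priori $\mathbf{L}^2(\rho)$ approximation of the tangent field $v$ by the gradient type fields $\nabla \psi_{\eps}$, glued together by the triangle inequality on $\Was{2}$.

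More concretely, I would first decompose
\begin{align*}
    \Was{2}\bigl((\Id+\eps v)_{\#}\rho,\; \SB{\rho}{\eps}{}\bigr) &\leq \Was{2}\bigl((\Id+\eps v)_{\#}\rho,\; (\Id+\eps \nabla \psi_{\eps})_{\#}\rho\bigr) \\ &\quad + \Was{2}\bigl((\Id+\eps \nabla \psi_{\eps})_{\#}\rho,\; \SB{\rho}{\eps}{}\bigr),
\end{align*}
and then divide by $\eps$. The second term is precisely $\eps^{-1}\Was{2}(\opt{\eps}{1}{\rho}, \SB{\rho}{\eps}{})$ in the notation of \eqref{eq:S-eps}, since $v_{\eps} = \nabla \psi_{\eps}$. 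Because the surrogate measures $(\sigma_{\eps})$ are assumed to satisfy Assumption \ref{assumptions:sigma-eps-onestep}, Theorem \ref{thm:one_step_convergence} applies and gives that this term vanishes as $\eps \downarrow 0$; indeed Lemma \ref{lem:integrated-fisher-continuity} (together with Assumption \ref{assumptions:sigma-eps-onestep}(1), which forces $I(\sigma_{\eps})-\int_0^1 I((\sigma_{\eps})_s^{\eps})ds \to 0$) drives the bracketed factor to $0$.

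For the first term, I would use the obvious coupling sending $x \in \mathbb{R}^d$ to the pair $(x + \eps v(x), x + \eps \nabla \psi_{\eps}(x))$ under the source measure $\rho$. This gives
\begin{equation*}
    \eps^{-1}\Was{2}\bigl((\Id+\eps v)_{\#}\rho, (\Id+\eps \nabla \psi_{\eps})_{\#}\rho\bigr) \leq \|\nabla \psi_{\eps}-v\|_{\mathbf{L}^2(\rho)},
\end{equation*}
which is exactly inequality \eqref{eq:ltwoapprox} in the text. By the hypothesis that $\nabla \psi_{\eps} \to v$ in $\mathbf{L}^2(\rho)$, this right-hand side tends to zero.

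Combining the two bounds yields $\lim_{\eps \downarrow 0+} \eps^{-1}\Was{2}((\Id+\eps v)_{\#}\rho, \SB{\rho}{\eps}{}) = 0$. There is essentially no obstacle here — both required ingredients are already stated in the paper. The only place one should be a little careful is to confirm that the coupling used for the first term is valid in $\Was{2}$ (it is, since both pushforwards are by maps defined $\rho$-a.e., and the resulting joint law sits in $\Pi((\Id+\eps v)_{\#}\rho,(\Id+\eps\nabla\psi_\eps)_{\#}\rho)$), and that Assumption \ref{assumptions:sigma-eps-onestep}(1) indeed supplies the convergence of the integrated Fisher information along the entropic interpolation needed to invoke the vanishing of the bound in Theorem \ref{thm:one_step_convergence}.
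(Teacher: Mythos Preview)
Your proposal is correct and follows exactly the route the paper takes: the paper simply says the corollary follows ``from the discussion following \eqref{eq:ltwoapprox}'', which is precisely the triangle-inequality decomposition you wrote out, using the obvious coupling for the first term and Theorem \ref{thm:one_step_convergence} for the second. One small remark: you need not invoke Lemma \ref{lem:integrated-fisher-continuity} separately, since the vanishing of $\eps^{-1}\Was{2}(\opt{\eps}{1}{\rho},\SB{\rho}{\eps}{})$ is already part of the conclusion of Theorem \ref{thm:one_step_convergence} (the varying-$\sigma_\eps$ version of that Fisher-information argument is handled inside Lemma \ref{lem:bound-preserved-eps}).
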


\subsection{Technical Lemmas and Proofs}\label{sec:technical-lemmas}
We now prove Theorems \ref{thm:score-function-approx}- \ref{thm:one_step_convergence}. The proofs are very connected and rely on three technical lemmas that we present upfront.

We begin by converting the relative entropy decay established in Theorem \ref{thm:ld-sb-general-g} between the $\Schro$ bridge and Langevin diffusion into a decay rate between their respective barycentric projections in the Wasserstein-2 metric. The essential tool for this conversion is a Talagrand inequality from \cite[Theorem 1.8(3)]{cattiaux-guillin-slc-14}.

\begin{lemma}\label{lem:BP_upperbounds}
Let $\eps > 0$. Under Assumption \ref{assumptions:sigma-eps-onestep}, there exists a constant $\alpha>0$ depending on $\eps$ and $\lambda$ such that
\begin{align*}
    \Was{2}^{2}((\BPbase{\sigma_{\eps}}{\eps})_{\#}\rho,(\LBPbase{\sigma_{\eps}}{\eps})_{\#}\rho) &\leq \alpha \|\rho/\sigma_{\eps}\|_{\infty}H(\SBstatic{\sigma_{\eps}}{\eps}| \LDstatic{\sigma_{\eps}}{\eps}) \\
    \Was{2}^{2}(\SB{\rho}{\eps}{},\LD{\rho}{\eps}{}) &\leq \alpha\theta_{\eps}^{-2} \|\rho/\sigma_{\eps}\|_{\infty}H(\SBstatic{\sigma_{\eps}}{\eps}| \LDstatic{\sigma_{\eps}}{\eps}).
\end{align*}
Moreover, $\alpha$ can be chosen to be increasing in $\eps$. 
\end{lemma}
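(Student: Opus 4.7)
The plan is to reduce both inequalities to a Talagrand-type $T_2$ inequality between the disintegrations of $\SBstatic{\sigma_{\eps}}{\eps}$ and $\LDstatic{\sigma_{\eps}}{\eps}$ given the first coordinate, then apply a change of measure from $\rho$ to $\sigma_\eps$.

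First, I would use the identity coupling on $\R^d$ to observe that
\begin{align*}
\Was{2}^2\bigl((\BPbase{\sigma_{\eps}}{\eps})_{\#}\rho,\; (\LBPbase{\sigma_{\eps}}{\eps})_{\#}\rho\bigr) &\leq \int \|\BPbase{\sigma_{\eps}}{\eps}(x) - \LBPbase{\sigma_{\eps}}{\eps}(x)\|^2\, d\rho(x),
\end{align*}
and that an analogous bound for $\Was{2}^2(\SB{\rho}{\eps}{}, \LD{\rho}{\eps}{})$ introduces only an extra factor $\theta_\eps^{-2}$, since by \eqref{sb-step-general}--\eqref{eq:LD-update} the two transport maps differ by exactly $\theta_\eps^{-1}(\BPbase{\sigma_\eps}{\eps} - \LBPbase{\sigma_\eps}{\eps})$. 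Bounding $d\rho \leq \|\rho/\sigma_\eps\|_\infty\, d\sigma_\eps$ pointwise reduces both inequalities to controlling
\[
J := \int \|\BPbase{\sigma_{\eps}}{\eps}(x) - \LBPbase{\sigma_{\eps}}{\eps}(x)\|^2\, d\sigma_{\eps}(x)
\]
by a multiple of $H(\SBstatic{\sigma_\eps}{\eps} \mid \LDstatic{\sigma_\eps}{\eps})$.

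To bound $J$, let $p_x$ and $q_x$ denote the disintegrations of $\SBstatic{\sigma_{\eps}}{\eps}$ and $\LDstatic{\sigma_\eps}{\eps}$ against their common first marginal $\sigma_\eps$. Then $\BPbase{\sigma_\eps}{\eps}(x)$ and $\LBPbase{\sigma_\eps}{\eps}(x)$ are the means of $p_x$ and $q_x$, so Jensen's inequality applied to any coupling of $(p_x, q_x)$, followed by taking the infimum, gives $\|\BPbase{\sigma_\eps}{\eps}(x) - \LBPbase{\sigma_\eps}{\eps}(x)\|^2 \leq \Was{2}^2(p_x, q_x)$. Under Assumption \ref{assumptions:sigma-eps-onestep}(2), $\sigma_\eps$ is uniformly $\lambda$-semi-log-concave, and the kernel $q_x$ is the Langevin transition density at time $\eps$ starting from $x$. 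The Cattiaux--Guillin $T_2$ inequality [Theorem 1.8(3)]{cattiaux-guillin-slc-14} then supplies a constant $\alpha = \alpha(\eps, \lambda) > 0$, monotone in $\eps$, such that $\Was{2}^2(p_x, q_x) \leq \alpha H(p_x \mid q_x)$ for $\sigma_\eps$-a.e.\ $x$. Integrating in $x$ and applying the chain rule for relative entropy (which is valid because both joints share marginal $\sigma_\eps$) yields
\[
J \leq \alpha \int H(p_x \mid q_x)\, d\sigma_\eps(x) = \alpha\, H(\SBstatic{\sigma_{\eps}}{\eps} \mid \LDstatic{\sigma_{\eps}}{\eps}),
\]
which combined with Step~1 gives both stated inequalities.

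\textbf{Main obstacle.} The technical heart of the argument is the third step: certifying that the Langevin transition kernel $q_x$ enjoys a $T_2$ inequality with a constant depending only on $\eps$ and the semiconvexity modulus $\lambda$, uniformly in the base point $x$ and across the family $(\sigma_\eps)_{\eps > 0}$. The semi-log-concavity hypothesis is exactly what is needed to invoke Cattiaux--Guillin, which delivers the required log-Sobolev (hence $T_2$) inequality for the kernel even when $\lambda < 0$. Monotonicity of $\alpha$ in $\eps$ can be arranged after the fact by replacing $\alpha(\eps, \lambda)$ with $\sup_{\delta \in (0,\eps]} \alpha(\delta, \lambda)$ if necessary, which remains finite for small $\eps$ by the short-time behaviour of the Cattiaux--Guillin constant.
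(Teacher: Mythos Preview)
Your proposal is correct and follows essentially the same route as the paper's proof: both reduce to the $L^2(\rho)$ distance between the two barycentric maps, bound the pointwise difference of conditional means by $\Was{2}$ between the disintegrations via Jensen, invoke the Cattiaux--Guillin $T_2$ inequality for the Langevin transition kernel under semi-log-concavity, change measure from $\rho$ to $\sigma_\eps$, and close with the chain rule for relative entropy. The paper packages the argument slightly differently by building an explicit gluing triplet $(X,Y,Z)$ with $X\sim\rho$ and conditionally optimal $(Y,Z)$, but this is only a presentational variant of your identity-coupling approach.
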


Recall that $\theta_{\eps} \in \mathbb{R} \setminus \{0\}$ is a constant chosen for integrability reasons outlined in \eqref{eq:simga-eps-onestep}-- it is simply an $O(1)$ term as $\eps \downarrow 0$. See Remark \ref{rmk:theta}.

\begin{proof}[Proof of Lemma \ref{lem:BP_upperbounds}]
Fix $T > 0$ and $\eps \in (0,T)$. To avoid unnecessary subscripts, set $\sigma := \sigma_{\eps}$. To start, we produce a coupling of $\LDstatic{\sigma}{\eps}$ and $\SBstatic{\sigma}{\eps}$ by applying a gluing argument (as in \cite[Lemma 7.6]{villani2021topics}, for example) to the optimal couplings between corresponding transition densities for the Langevin diffusion and $\eps$-static $\Schro$ bridge. From this coupling, we obtain a coupling of the desired barycentric projections of $\LDstatic{\sigma}{\eps}$ and $\SBstatic{\sigma}{\eps}$. We then use the chain rule of relative entropy alongside the Talagrand inequality stated in \cite[Theorem 1.8(3)]{cattiaux-guillin-slc-14} to obtain the inequality stated in the Lemma. 

Let $(q_{t}(\cdot,\cdot),t \geq 0)$ denote the transition densities of the stationary Langevin diffusion associated to $\sigma$. By \cite[Theorem 1.8(3)]{cattiaux-guillin-slc-14}, there is a constant $\alpha>0$ depending on $\lambda$ and $T$ such that following Talagrand inequality holds for each $x \in \mathbb{R}^{d}$,
$\Was{2}^{2}(\eta,q_{\eps}(x,\cdot)) \leq \alpha H(\eta |q_{\eps}(x,\cdot))$, for all $\eta \in \cP(\mathbb{R}^{d})$. Let $(p(x,\cdot), x \in \mathbb{R}^{d})$ denote the conditional densities of the $\eps$-static $\Schro$ bridge with marginals $\sigma$. Construct a triplet of $\mathbb{R}^{d}$-valued random variables $(X,Y,Z)$ in the following way. Let $X \sim \rho$, and let the conditional measures $(Y|X=x,Z|X=x)$ have the law of the quadratic cost optimal coupling between $p(x,\cdot)$ and $q_{\eps}(x,\cdot)$. Denote the law of $(X,Y,Z)$ on $\mathbb{R}^{d} \times \mathbb{R}^{d} \times \mathbb{R}^{d}$ by $\nu$. 
Observe that $(\Exp{v}[Y|X],\Exp{v}[Z|X])$ is a coupling of $(\LBPbase{\sigma_{\eps}}{\eps})_{\#}\rho$ and $(\BPbase{\sigma_{\eps}}{\eps})_{\#}\rho$. Moreover, observe that $((1-\theta_{\eps}^{-1})X+\theta_{\eps}^{-1}\Exp{v}[Y|X],(1-\theta_{\eps}^{-1})X+\theta_{\eps}^{-1}\Exp{v}[Z|X])$ is a coupling of $\SB{\rho}{\eps}{}$ and $\LD{\rho}{\eps}{}$. Thus, to compute an upper bound of $\Was{2}^{2}(\SB{\rho}{\eps}{},\LD{\rho}{\eps}{})$, it is sufficient to compute $\Exp{}\|\Exp{v}[Y|X]-\Exp{v}[Z|X]\|^2$.

We first observe for each $x \in \mathbb{R}^{d}$ that by two applications of the (conditional) Jensen's inequality, the optimality of $(Y|X=x,Z|X=x)$, and the aforementioned Talagrand inequality 
\begin{align*}
    \|\Exp{v}[Y|X=x]-\Exp{v}[Z|X=x]\| 
    &\leq \Was{2}(q_{\eps}(x,\cdot),p(x,\cdot)) \leq \sqrt{\alpha H(p(x,\cdot) |q_{\eps}(x,\cdot))}.
\end{align*}
Hence, using the chain rule of relative entropy and the fact that the Langevin diffusion and $\Schro$ bridge both start from $\sigma$
\begin{align*}
    \Was{2}^{2}&((\BPbase{\sigma}{\eps})_{\#}\rho,(\LBPbase{\sigma}{\eps})_{\#}\rho) \leq \Exp{v}\|\Exp{v}[Y|X=x]-\Exp{v}[Z|X=x]\|^2 \\
    &\leq \int \alpha H(p(x,\cdot)|q_{\eps}(x,\cdot)) \rho(x)dx = \int \alpha H(p(x,\cdot)|q_{\eps}(x,\cdot)) \sigma(x)\frac{\rho(x)}{\sigma(x)}dx\\
    &\leq \alpha \|\rho/\sigma_{\eps}\|_{\infty}H(\SBstatic{\sigma_{\eps}}{\eps}{}|\LDstatic{\sigma_{\eps}}{\eps}{}).
\end{align*}
\end{proof}

Now, we revisit the bound from Theorem \ref{thm:ld-sb-general-g} and apply it to Lemma \ref{lem:BP_upperbounds}.
\begin{lemma}\label{lem:bound-preserved-eps}
    Under Assumption \ref{assumptions:sigma-eps-onestep}, there exists a constant $K>0$ depending on $N$, $C$, and $D$ such that for all $\eps \in (0,\eps_0)$ 
    \begin{align}\label{eq:bound-preserved-eps}
        H(\SBstatic{\sigma_{\eps}}{\eps}|\LDstatic{\sigma_{\eps}}{\eps})+H(\LDstatic{\sigma_{\eps}}{\eps}|\SBstatic{\sigma_{\eps}}{\eps}) \leq \frac{\eps^2}{2}K\left(I(\sigma_{\eps})-\int_{0}^{1} I((\sigma_{\eps
    })_{s}^{\eps}) ds \right)^{1/2}  
    \end{align}
    In particular,  
    $\lim\limits_{\eps \downarrow 0} \eps^{-2}\left(H(\SBstatic{\sigma_{\eps}}{\eps}|\LDstatic{\sigma_{\eps}}{\eps})+H(\LDstatic{\sigma_{\eps}}{\eps}|\SBstatic{\sigma_{\eps}}{\eps})\right) = 0.$
\end{lemma}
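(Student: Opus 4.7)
The plan is to apply Theorem \ref{thm:ld-sb-general-g} with $\rho$ replaced by $\sigma_{\eps}$, which is permissible since Assumption \ref{assumptions:sigma-eps-onestep} states that each $\sigma_{\eps}$ satisfies Assumption \ref{assumption:LD_SB_relative_entropy}. This immediately yields, for every $\eps \in (0,\eps_0)$, the inequality
\begin{align*}
    H(\SBstatic{\sigma_{\eps}}{\eps}|\LDstatic{\sigma_{\eps}}{\eps})+H(\LDstatic{\sigma_{\eps}}{\eps}|\SBstatic{\sigma_{\eps}}{\eps}) \leq \frac{\eps^2}{2}\left(I(\sigma_{\eps})-\int_{0}^{1} I((\sigma_{\eps})_{s}^{\eps}) ds\right)^{1/2}\left(\int_{0}^{1} \Exp{(\sigma_{\eps})_{s}^{\eps}}\|\nabla \cU_{\eps}\|^2 ds\right)^{1/2}.
\end{align*}
The whole content of the lemma is therefore to show that the second factor admits an $\eps$-independent upper bound $\sqrt{K}$ with $K$ depending only on $N, C, D$.

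To bound this factor uniformly, I would revisit the argument of Lemma \ref{lem:integ-entropic-inter} and verify that it goes through uniformly in $\eps$. Using the representation \eqref{eq:rv-entropic-inter} with $(X_{\eps},Y_{\eps}) \sim \SBstatic{\sigma_{\eps}}{\eps}$ and an independent $Z \sim N(0,\Id)$, and since $\|\cdot\|_{\psi_1}$ is a norm, I get
\begin{align*}
    \sup_{\eps \in (0,\eps_0),\, s \in [0,1]} \|X_s^{\eps}\|_{\psi_1} \leq \sup_{\eps \in (0,\eps_0),\, s \in [0,1]}\left((1-s)D + sD + \sqrt{\eps_0 s(1-s)}\|Z\|_{\psi_1}\right) < +\infty,
\end{align*}
where I use Assumption \ref{assumptions:sigma-eps-onestep}(1) to bound $\|X_{\eps}\|_{\psi_1}, \|Y_{\eps}\|_{\psi_1} \leq D$. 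A uniform subexponential norm implies uniform bounds on all polynomial moments, hence combined with the uniform polynomial growth bound $\|\nabla \cU_{\eps}(x)\|^2 \leq C(1+\|x\|^N)$ from Assumption \ref{assumptions:sigma-eps-onestep}(1), I obtain a constant $K = K(N,C,D)$ such that
\begin{align*}
    \sup_{\eps \in (0,\eps_0)} \int_0^1 \Exp{(\sigma_{\eps})_s^{\eps}}\|\nabla \cU_{\eps}\|^2 \, ds \leq K.
\end{align*}
This yields \eqref{eq:bound-preserved-eps}.

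For the final convergence statement, it remains to show that $I(\sigma_{\eps}) - \int_0^1 I((\sigma_{\eps})_s^{\eps}) ds \to 0$ as $\eps \downarrow 0$. Arguing as in Lemma \ref{lem:integrated-fisher-continuity}, the entropic Benamou--Brenier identity \eqref{eq:symm-ent-cost-BB} and the suboptimality of the constant curve give $\int_0^1 I((\sigma_{\eps})_s^{\eps}) ds \leq I(\sigma_{\eps})$. By Assumption \ref{assumptions:sigma-eps-onestep}(1), $I(\sigma_{\eps}) \to I(\sigma_0)$, so $\limsup_{\eps \downarrow 0} \int_0^1 I((\sigma_{\eps})_s^{\eps}) ds \leq I(\sigma_0)$. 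For the matching liminf, I would use that $(\sigma_{\eps})_s^{\eps}$ converges weakly to $\sigma_0$ as $\eps \downarrow 0$ for each $s$ (following the weak convergence argument of \cite[Theorem 3.7(3)]{leo-sb-to-kp12} combined with the weak convergence $\sigma_{\eps} \to \sigma_0$), then invoke the lower semicontinuity of Fisher information and Fatou's lemma to get $I(\sigma_0) \leq \liminf_{\eps \downarrow 0} \int_0^1 I((\sigma_{\eps})_s^{\eps}) ds$. Together, $\int_0^1 I((\sigma_{\eps})_s^{\eps}) ds \to I(\sigma_0)$, and hence the difference $I(\sigma_{\eps}) - \int_0^1 I((\sigma_{\eps})_s^{\eps}) ds$ tends to zero.

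The main technical obstacle is the weak convergence of the entropic interpolations $(\sigma_{\eps})_s^{\eps}$ to $\sigma_0$ as $\eps \downarrow 0$ when the marginals themselves vary with $\eps$; the cited \cite[Theorem 3.7(3)]{leo-sb-to-kp12} is stated for fixed marginals, so one must confirm that a joint limit argument is valid under the uniform entropy and moment control provided by Assumption \ref{assumptions:sigma-eps-onestep}. The uniform subexponential bound on $(\sigma_{\eps})$ makes the family tight and enables standard passage-to-the-limit techniques to extend the result.
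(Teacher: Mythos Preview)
Your proposal is correct and follows essentially the same route as the paper: apply Theorem \ref{thm:ld-sb-general-g} with $\rho$ replaced by $\sigma_{\eps}$, use the uniform subexponential bound and the uniform polynomial growth of $\nabla \cU_{\eps}$ (via the representation \eqref{eq:rv-entropic-inter}) to bound the second factor by a constant $K(N,C,D)$, and then argue $I(\sigma_{\eps}) - \int_{0}^{1} I((\sigma_{\eps})_{s}^{\eps})\,ds \to 0$ by combining the suboptimality of the constant curve with lower semicontinuity of Fisher information and Fatou.

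The one place where your sketch is vaguer than the paper is precisely the step you flag: the weak convergence $(\sigma_{\eps})_{s}^{\eps} \Rightarrow \sigma_{0}$ with \emph{varying} marginals. The paper does not try to adapt \cite[Theorem 3.7(3)]{leo-sb-to-kp12} directly. Instead, it argues that tightness of $(\sigma_{\eps})$ gives tightness of $(\SBstatic{\sigma_{\eps}}{\eps})$, and then shows that any subsequential weak limit $\pi^{*} \in \Pi(\sigma_{0},\sigma_{0})$ is $c$-cyclically monotone by invoking the $(c,\eps)$-cyclical invariance argument of \cite{bgn-eot-gld}, forcing $\pi^{*} = (\Id,\Id)_{\#}\sigma_{0}$; the interpolation representation \eqref{eq:rv-entropic-inter} and the Continuous Mapping Theorem then yield $(\sigma_{\eps})_{s}^{\eps} \Rightarrow \sigma_{0}$. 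This is a clean way to close the gap you identified.
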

\begin{proof}
    Let $\eps \in (0,\eps_0)$ and in analogy with the Section \ref{sec:sb-ld} let $((\sigma_{\eps})_{s}^{\eps},s\in[0,1])$ denote the $\eps$-entropic interpolation from $\sigma_{\eps}$ to itself and recall \eqref{eq:defineueps}.
    By the assumed uniform subexponential bound on the $\sigma_{\eps}$ and uniform polynomial growth of $\cU_{\eps}$, there exists a constant $K > 0$ as specified in the Lemma statement such that     
    \begin{align}\label{eq:upper-bdd-varyingeps}
        \sup\limits_{\eps \in (0,\eps_0),s \in [0,1]}\Exp{(\sigma_{\eps})_{s}^{\eps}}[\|\nabla \cU_{\eps}\|^2] \leq K.
    \end{align}
    Theorem \ref{thm:ld-sb-general-g} and \eqref{eq:upper-bdd-varyingeps} then establish \eqref{eq:bound-preserved-eps} when $\eps \in (0,\eps_0)$.
    It now remains to show that
    \begin{align}\label{eq:varying-eps-cont-fisher}
        \lim\limits_{\eps \downarrow 0} \left[I(\sigma_{\eps})-\int_{0}^{1} I((\sigma_{\eps
    })_{s}^{\eps}) ds \right] = 0.
    \end{align}
    First, we claim that $((\sigma_{\eps})_{s}^{\eps}, \eps > 0)$ converges weakly to $ \sigma_{0}$ as $\eps \downarrow 0$ for all $s \in [0,1]$. As $(\sigma_{\eps}, \eps > 0)$ converges weakly to $\sigma_0$, the collection $\{\sigma_0\} \cup (\sigma_{\eps}, \eps > 0)$ is tight. This implies that $(\SBstatic{\sigma_{\eps}}{\eps}, \eps > 0)$ is tight. Let $\pi^{*}$ denote a weak subsequential limit-- we will not denote the subsequence. It is clear that $\pi^{*} \in \Pi(\sigma_0,\sigma_0)$. In the terminology of \cite{bgn-eot-gld}, the support of each $\SBstatic{\sigma_{\eps}}{\eps}$ is $(c,\eps)$-cyclically invariant, where $c(x,y)= \frac{1}{2}\|x-y\|^2$. As $\pi^*$ is a weak subsequential limit of the $(\SBstatic{\sigma_{\eps}}{\eps}, \eps > 0)$, the exact same argument presented in \cite[Lemma 3.1, Lemma 3.2]{bgn-eot-gld}, modified only superficially to allow for the different marginal in each $\eps$, establishes that $\pi^*$ is $c$-cyclically monotone. That is, $\pi^* = \SBstatic{\sigma_0}{0}$ is the quadratic cost optimal transport plan from $\sigma_0$ to itself. Moreover, this argument shows that along any subsequence of $\eps$ decreasing to $0$, there is a further subsequence converging to $\SBstatic{\sigma_0}{0}$. Hence, the limit holds without passing to a subsequence. 
 
    Construct a sequence of random variables $((X_{\eps},Y_{\eps}), \eps \geq 0)$ with $\text{Law}(X_{\eps},Y_{\eps}) = \SBstatic{\sigma_{\eps}}{\eps}$, and let $Z$ be a standard normal random variable independent to the collection. Fix $s \in [0,1]$, and recall that $\text{Law}(sX_{\eps}+(1-s)Y_{\eps}+\sqrt{\eps s(1-s)}Z) = (\sigma_{\eps})_{s}^{\eps}$. The Continuous Mapping Theorem gives that $sX_{\eps}+(1-s)Y_{\eps}+\sqrt{\eps s(1-s)}Z$ converges in distribution to $sX_{0}+(1-s)Y_{0}$ as $\eps \downarrow 0$. 
    As $\text{Law}(sX_0+(1-s)Y_0) = \sigma_0$, this establishes the weak convergence of $(\sigma_{\eps})_{s}^{\eps}$ to $\sigma_{0}$ as $\eps \downarrow 0$. By the lower semicontinuity of Fisher information with respect to weak convergence \cite[Proposition 14.2]{bobkov-fisher-22} and Fatou's Lemma
    $
        I(\sigma_0) \leq \int_{0}^{1} \liminf\limits_{\eps \downarrow 0} I((\sigma_{\eps})_{s}^{\eps}) ds \leq \liminf\limits_{\eps \downarrow 0} \int_{0}^{1} I((\sigma_{\eps})_{s}^{\eps}) ds.
    $
    Thus, \eqref{eq:varying-eps-cont-fisher} holds as Assumption \ref{assumptions:sigma-eps-onestep}(1) gives $\lim\limits_{\eps \downarrow 0} I(\sigma_{\eps}) = I(\sigma_0)$. 
\end{proof}

As the last piece of the proof, we show in the following lemma below that $\LD{\rho}{\eps}{}$ is an $o(\eps)$ approximation of the first step of the Euler iteration $\opt{\eps}{1}{\rho}$. 

\begin{lemma}[]\label{lem:LD_BP_covergence}
For $\rho = e^{-g}$ satisfying Assumption \ref{assumptions:sigma-eps-onestep}, 
\[
\Was{2}^2(\opt{\eps}{1}{\rho}, \LD{\rho}{\eps}{}) \leq 2\theta_{\eps}^{-2}\|\rho/\sigma_{\eps}\|_{\infty}\eps^3 \Exp{\sigma_{\eps}}\|\nabla^2 \log \sigma_{\eps}\|^2_{HS}\,.
\]
\end{lemma}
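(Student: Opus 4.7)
My plan is to exhibit an explicit transport map from $\rho$ to $\LD{\rho}{\eps}{}$, compare it with the Euler map $x \mapsto x + \eps v_\eps(x)$, and bound their $\mathbf{L}^2(\rho)$ distance via a short-time gradient estimate for the Langevin semigroup at its stationary measure $\sigma_\eps$. Let $(Y_s)_{s \geq 0}$ denote the Langevin diffusion with stationary density $\sigma_\eps$; it solves $dY_s = \theta_\eps v_\eps(Y_s)\,ds + dB_s$, since $\log \sigma_\eps = 2\theta_\eps \psi_\eps - \Lambda_0(\theta_\eps)$ gives $\tfrac{1}{2}\nabla \log \sigma_\eps = \theta_\eps v_\eps$. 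Integrating the SDE from $0$ to $\eps$ and taking conditional expectations yields $\LBPbase{\sigma_\eps}{\eps}(x) - x = \theta_\eps \int_0^\eps \mathbb{E}[v_\eps(Y_s) \mid Y_0 = x]\,ds$, so by \eqref{eq:LD-update} the map
\[
T(x) := x + \int_0^\eps \mathbb{E}[v_\eps(Y_s) \mid Y_0 = x]\,ds
\]
satisfies $T_{\#}\rho = \LD{\rho}{\eps}{}$. Coupling through a shared $X \sim \rho$ with the Euler map $U(x) := x + \eps v_\eps(x)$, whose pushforward is $\opt{\eps}{1}{\rho}$, gives $\Was{2}^2(\opt{\eps}{1}{\rho}, \LD{\rho}{\eps}{}) \leq \Exp{\rho}\|T(X) - U(X)\|^2$ with $T(x) - U(x) = \int_0^\eps \mathbb{E}[v_\eps(Y_s) - v_\eps(x) \mid Y_0 = x]\,ds$.

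Applying Cauchy-Schwarz in the time variable and then conditional Jensen gives
\[
\|T(x) - U(x)\|^2 \leq \eps \int_0^\eps \mathbb{E}[\|v_\eps(Y_s) - v_\eps(x)\|^2 \mid Y_0 = x]\,ds,
\]
and switching from $\rho$ to $\sigma_\eps$ at cost $\|\rho/\sigma_\eps\|_\infty$ converts $\Exp{\rho}$ into an expectation under the \emph{stationary} joint law of $(Y_0, Y_s)$. The crux is then the short-time estimate
\[
\Exp{\sigma_\eps}\|v_\eps(Y_s) - v_\eps(Y_0)\|^2 \leq s \int \|\nabla v_\eps\|_{HS}^2 \, d\sigma_\eps,
\]
which I would establish componentwise via the reversibility identity $\Exp{\sigma_\eps}[(f(Y_s) - f(Y_0))^2] = 2\int_0^s \mathcal{E}(f, P_u f)\, du$ (integration by parts against $\sigma_\eps$), combined with the Dirichlet-form contraction $\mathcal{E}(P_u f, P_u f) \leq \mathcal{E}(f,f)$ along any reversible semigroup and Cauchy-Schwarz in the $\mathcal{E}$-inner product.

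Assembling these inequalities and inserting $\nabla v_\eps = (2\theta_\eps)^{-1}\nabla^2 \log \sigma_\eps$ delivers
\[
\Was{2}^2(\opt{\eps}{1}{\rho}, \LD{\rho}{\eps}{}) \leq \frac{\eps^3}{8\theta_\eps^2}\|\rho/\sigma_\eps\|_\infty \Exp{\sigma_\eps}\|\nabla^2 \log \sigma_\eps\|_{HS}^2,
\]
which is strictly stronger than the stated bound (the constant $2$ in the lemma absorbs some slack). The main technical obstacle I anticipate is the rigorous justification of the Dirichlet-form identity and the vanishing in expectation of stochastic-integral contributions: this requires $v_\eps$ to lie in the domain of the Dirichlet form, which the hypothesis $\Exp{\sigma_\eps}\|\nabla^2 \log \sigma_\eps\|_{HS}^2 < \infty$ delivers via $\nabla v_\eps = (2\theta_\eps)^{-1}\nabla^2 \log \sigma_\eps$, together with Assumption \ref{assumption:LD_SB_relative_entropy} applied to $\sigma_\eps$ to ensure non-explosion and reversibility of the Langevin diffusion.
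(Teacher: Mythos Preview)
Your proposal is correct and follows essentially the same route as the paper's proof: both couple through the shared transport maps $T(x)=x+\int_0^\eps G_s^{\sigma_\eps}v_\eps(x)\,ds$ and $U(x)=x+\eps v_\eps(x)$, apply Cauchy--Schwarz in the time variable, change measure from $\rho$ to $\sigma_\eps$ at cost $\|\rho/\sigma_\eps\|_\infty$, and then invoke a short-time Dirichlet-form estimate. The only difference is that the paper cites \cite[(4.2.3)]{bgl-markov} for $\|G_s f-f\|_{L^2(\sigma_\eps)}^2\le 2s\|\nabla f\|_{L^2(\sigma_\eps)}^2$ directly (thus skipping your conditional-Jensen step), while you re-derive the closely related variance bound $E_{\sigma_\eps}[(f(Y_s)-f(Y_0))^2]=2\int_0^s\mathcal E(f,P_u f)\,du\le 2s\,\mathcal E(f,f)$ by hand; your sharper constant $\tfrac{1}{8}$ is genuine and the argument is sound.
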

\begin{proof}
The proof follows by producing an intuitive coupling of $\opt{\eps}{1}{\rho}$ and $\LD{\rho}{\eps}{}$ (defined in \eqref{eq:S-eps} and \eqref{eq:LD-update}, respectively) and then applying a semigroup property stated in \cite[Equation (4.2.3)]{bgl-markov}. Let $(G_{s}^{\sigma_{\eps}}, s \geq 0)$ be the semigroup associated to the Langevin diffusion corresponding to $\sigma_{\eps}$, then for all $\eps > 0$, $s \geq 0$, and $f: \mathbb{R}^{d} \to \mathbb{R}$ in the domain of the Dirichlet form 
\begin{align}\label{eq:semigrp-bdd}
    \|G_{s}^{\sigma_{\eps}}f - f\|^{2}_{L^{2}(\sigma_{\eps})} \leq 2s\|\nabla f\|_{L^{2}(\sigma_{\eps})}^{2}. 
\end{align}
Let $X \sim \rho$, then by Dynkin's formula, \eqref{eq:lpbase-defn} becomes
\begin{align*}
    \LBPbase{\rho}{\eps}(X) &= X+\int_{0}^{\eps}G^{\sigma_{\eps}}_{s}(\theta_{\eps}\nabla \psi_{\eps})(X)ds,
\end{align*}
and thus
    \begin{align*}
    \LD{\rho}{\eps}{} &= \text{Law}\left((1-\theta_{\eps}^{-1})X+\theta_{\eps}^{-1}\left(X+\int_{0}^{\eps}G^{\sigma_{\eps}}_{s}(\theta_{\eps}\nabla \psi_{\eps})(X)ds\right)\right)\\
    &= \text{Law}\left(X+\int_{0}^{\eps}G^{\sigma_{\eps}}_{s}(\nabla \psi_{\eps})(X)ds\right). 
    \end{align*}
Also by definition
    $\opt{\eps}{1}{\rho} = \text{Law}\left(X + \eps \nabla \psi_{\eps}(X)\right)$. 
    Thus,
    \begin{align*}
        &\Was{2}^{2}(\LD{\rho}{\eps}{},\opt{\eps}{1}{\rho}) \leq \Exp{} \left\|\left(X+\int_{0}^{\eps}G^{\sigma_{\eps}}_{s}(\nabla \psi_{\eps})(X)ds\right)-\left(X + \eps \nabla \psi_{\eps}(X))\right)\right\|^{2} \\
        &= \Exp{}\left\|\int_{0}^{\eps} \left(G_{s}^{\sigma_{\eps}}(\nabla \psi_{\eps})(X)-\nabla \psi_{\eps}(X) \right)ds \right\|^2 \\
        &\leq \eps \int_{0}^{\eps} \Exp{}\|G_{s}^{\sigma_{\eps}}(\nabla \psi_{\eps})(X)-\nabla \psi_{\eps}(X)\|^2 ds = \eps^2 \int_{0}^{1} \|G_{\eps s}^{\sigma_{\eps}}(\nabla \psi_{\eps})-\nabla \psi_{\eps}\|_{L^{2}(\rho)}^2 ds,
    \end{align*}
    where the penultimate inequality follows from Jensen's inequality. Then, perform a change of measure and apply \eqref{eq:semigrp-bdd} to obtain
    \begin{align*}
        \Was{2}^{2}(\LD{\rho}{\eps}{},\opt{\eps}{1}{\rho}) &\leq 2\|\rho/\sigma_{\eps}\|_{\infty}\eps^3 \Exp{\sigma_{\eps}}\|\nabla^2 \psi_{\eps}\|_{HS}^{2}.
    \end{align*}
\end{proof}

With Lemmas \ref{lem:BP_upperbounds}-\ref{lem:LD_BP_covergence} established, we can now quickly prove Theorems \ref{thm:score-function-approx}- \ref{thm:one_step_convergence}. 

\begin{proof}[Proof of Theorem \ref{thm:one_step_convergence}]
This result follows from Lemmas \ref{lem:BP_upperbounds}, \ref{lem:bound-preserved-eps}, and \ref{lem:LD_BP_covergence}. Lemmas \ref{lem:BP_upperbounds} and \ref{lem:bound-preserved-eps} together prove an upper bound on $\Was{2}(\SB{\rho}{\eps}{},\LD{\rho}{\eps}{})$ that show, in particular,
$\lim\limits_{\eps \downarrow 0} \frac{1}{\eps}\Was{2}(\SB{\rho}{\eps}{},\LD{\rho}{\eps}{})=0,
$
and Lemma \ref{lem:LD_BP_covergence} proves an upper bound on $\Was{2}(\LD{\rho}{\eps}{},\opt{\eps}{1}{\rho})$ which implies 
$
\lim\limits_{\eps \downarrow 0} \frac{1}{\eps}\Was{2}(\LD{\rho}{\eps}{},\opt{\eps}{1}{\rho})=0.
$
That is, the three updates $\SB{\rho}{\eps}{}$, $\LD{\rho}{\eps}{}$, and $\opt{\eps}{1}{\rho}$ are within $o(\eps)$ of each other in the Wasserstein-2 metric. Theorem \ref{thm:one_step_convergence} now follows from the triangle inequality and simplifying the choice of the constant $K$.
\end{proof}

\begin{proof}[Proof of Theorem \ref{thm:score-function-approx}]
    Theorem \ref{thm:score-function-approx} follows directly from Theorem \ref{thm:one_step_convergence} by setting $\sigma_{\eps} = \rho$ for all $\eps > 0$. More precisely, the $\Was{2}$ bounds in Lemmas \ref{lem:BP_upperbounds} and \ref{lem:LD_BP_covergence} are obtained by bounding the $\mathbf{L}^2(\rho)$ convergence rates appearing in the statement of Theorem \ref{thm:score-function-approx}. The convergence of the gradient of the entropic potentials follows from \eqref{eq:bp-potential-identity}. 
\end{proof}

\begin{proof}[Proof of Theorem \ref{thm:sb-generator-approx}]
The proof follows from a slight modification of Lemma \ref{lem:BP_upperbounds}. Fix $\xi: \mathbb{R}^{d} \to \mathbb{R}$ as specified in the Theorem statement. Observe that
\begin{align*}
    \Exp{\SBstatic{\rho}{\eps}}[\xi(Y)|X=x]-\xi(x) &= \left(\Exp{\SBstatic{\rho}{\eps}}[\xi(Y)|X=x]-\Exp{\LDstatic{\rho}{\eps}}[\xi(Y)|X=x]\right)\\
    &+\left(\Exp{\LDstatic{\rho}{\eps}}[\xi(Y)|X=x]-\xi(x)\right).
\end{align*}
As $\xi \in \mathcal{D}(L)$, it holds in $\textbf{L}^{2}(\rho)$ that
    $\frac{1}{\eps}\left(\Exp{\LDstatic{\rho}{\eps}}[\xi(Y)|X=x]-\xi(x)\right) \to L \xi(x)$
as $\eps \downarrow 0$. It remains to show that $\frac{1}{\eps}\left(\Exp{\SBstatic{\rho}{\eps}}[\xi(Y)|X=x]-\Exp{\LDstatic{\rho}{\eps}}[\xi(Y)|X=x]\right)$ vanishes. Let $C > 0$ denote the Lipschitz constant of $\xi$, and let $(X,Y,Z)$ be the coupling constructed in Lemma \ref{lem:BP_upperbounds}. By the same argument in Lemma \ref{lem:BP_upperbounds} (i.e.\ Jensen and Talagrand), 
\begin{align*}
    \frac{1}{\eps}\abs{\Exp{\SBstatic{\rho}{\eps}}[\xi(Y)|X=x]-\Exp{\LDstatic{\rho}{\eps}}[\xi(Y)|X=x]} &= \frac{1}{\eps}\abs{\Exp{}[\xi(Y)|X=x]-\Exp{}[\xi(Z)|X=x]} \\
    &\leq \frac{C}{\eps}\sqrt{\alpha H(p(x,\cdot)|q_{\eps}(x,\cdot))}.
\end{align*}
Hence, by Jensen's inequality and Theorem \ref{thm:ld-sb-general-g},
\begin{align*}
    \limsup\limits_{\eps \downarrow 0}\frac{1}{\eps}\norm{\Exp{\SBstatic{\rho}{\eps}}[\xi(Y)|X=x]-\Exp{\LDstatic{\rho}{\eps}}[\xi(Y)|X=x]}_{L^{2}(\rho)} &\leq \lim\limits_{\eps \downarrow 0} \frac{C}{\eps}\sqrt{\alpha H(\SBstatic{\rho}{\eps}|\LDstatic{\rho}{\eps})} = 0.
\end{align*}  
\end{proof}

\bibliographystyle{alpha}
\bibliography{sample}

\end{document}